\newtheorem{theorem}{Theorem}[section]
\newtheorem{lemma}[theorem]{Lemma}
\newtheorem{proposition}[theorem]{Proposition}
\newtheorem*{theorem*}{Theorem}
\newtheorem*{lemma*}{Lemma}
\newtheorem*{remark*}{Remark}
\newtheorem*{definition*}{Definition}
\newtheorem*{proposition*}{Proposition}
\newtheorem*{corollary*}{Corollary}
\numberwithin{equation}{section}
\newcommand{\real}{\mathbb{R}}
\let\ced=\c         % cedilla
\def\qed{\,\unskip\kern 6pt \penalty 500
\raise -2pt\hbox{\vrule \vbox to8pt{\hrule width 6pt
\vfill\hrule}\vrule}\par}
\definecolor{darkblue}{rgb}{0.05, .05, .65}
\definecolor{darkgreen}{rgb}{0.1, .65, .1}
\definecolor{darkred}{rgb}{0.8,0,0}
\newcommand{\beqn}{\begin{equation}}
\newcommand{\eeqn}{\end{equation}}
\newcommand{\bear}{\begin{eqnarray}}
\newcommand{\eear}{\end{eqnarray}}
\newcommand{\bean}{\begin{eqnarray*}}
\newcommand{\eean}{\end{eqnarray*}}
\begin{document}
%%%%%%%%%%%%%%%%%%%%%%%%%%%%%%%%%%%%%%%%%%%%%%%%%

%%%%%%%%%%%%%%%%%%%%%%%%%%%%%%%%%%%%%%%%%%%%%%%%%
\title{\huge \bf Anomalous self-similar solutions of exponential type for the subcritical fast diffusion equation with weighted reaction}

\author{
\Large Razvan Gabriel Iagar\,\footnote{Departamento de Matem\'{a}tica
Aplicada, Ciencia e Ingenieria de los Materiales y Tecnologia
Electr\'onica, Universidad Rey Juan Carlos, M\'{o}stoles,
28933, Madrid, Spain, \textit{e-mail:} razvan.iagar@urjc.es},
\\[4pt] \Large Ariel S\'{a}nchez,\footnote{Departamento de Matem\'{a}tica
Aplicada, Ciencia e Ingenieria de los Materiales y Tecnologia
Electr\'onica, Universidad Rey Juan Carlos, M\'{o}stoles,
28933, Madrid, Spain, \textit{e-mail:} ariel.sanchez@urjc.es}\\
[4pt] }
\date{}
\maketitle

\begin{abstract}
We prove existence and uniqueness of the branch of the so-called \emph{anomalous eternal solutions} in exponential self-similar form for the subcritical fast-diffusion equation with a weighted reaction term
$$
\partial_tu=\Delta u^m+|x|^{\sigma}u^p,
$$
posed in $\real^N$ with $N\geq3$, where
$$
0<m<m_c=\frac{N-2}{N}, \qquad p>1,
$$
and the critical value for the weight
$$
\sigma=\frac{2(p-1)}{1-m}.
$$
The branch of exponential self-similar solutions behaves similarly as the well-established anomalous solutions to the pure fast diffusion equation, but without a finite time extinction or a finite time blow-up, and presenting instead a \emph{change of sign of both self-similar exponents} at $m=m_s=(N-2)/(N+2)$, leading to surprising qualitative differences. In this sense, the reaction term we consider realizes a \emph{perfect equilibrium} in the competition between the fast diffusion and the reaction effects.
\end{abstract}

\

\noindent {\bf MSC Subject Classification 2020:} 35B33, 35B36, 35C06,
35K10, 35K57.

\smallskip

\noindent {\bf Keywords and phrases:} anomalous solutions, fast diffusion equation,
weighted reaction, exponential self-similar solutions, phase plane analysis, critical exponents.

\section{Introduction}

The goal of this paper is to establish the existence and uniqueness of \emph{eternal solutions} in exponential self-similar form for the fast diffusion equation with weighted reaction
\begin{equation}\label{eq1}
\partial_tu=\Delta u^m+|x|^{\sigma}u^p,
\end{equation}
posed in $\real^N$, with $m<1$, $p>1$, $N\geq3$ and the critical value of the exponent $\sigma$
\begin{equation}\label{sig}
\sigma=\frac{2(p-1)}{1-m}.
\end{equation}
As we shall see in the paper, we construct a branch of self-similar solutions which are analogous to the celebrated \emph{anomalous solutions} introduced by \cite{Ki93, PZ95}, for the \emph{subcritical range} of the fast diffusion
\begin{equation}\label{mc}
0<m<m_c:=\frac{N-2}{N}, \qquad N\geq3,
\end{equation}
showing at the same time that such solutions \emph{do no longer exist} in the complementary range $m_c\leq m<1$.

The main mathematical feature of Eq. \eqref{eq1} is the \emph{competition} between the two effects that appear in its formulation: a fast diffusion which, in the subcritical range \eqref{mc}, tends to finite time extinction due to a loss of mass through infinity which is well explained in \cite[Section 5.5, p. 91]{VazSmooth}, and a weighted reaction tending to add mass to the solution (and thus compensate for the loss of mass explained above) and whose sharpest effect is in many occasions the appearance of finite time blow-up. One of the most interesting outcomes of the current work is that the value of $\sigma$ in \eqref{sig} leads to a \emph{perfect equilibrium} between these two effects: as we shall see, the solutions we construct to Eq. \eqref{eq1} do neither extinguish, nor blow up in finite time.

The fast diffusion equation
\begin{equation}\label{FDE}
u_t=\Delta u^m, \qquad 0<m<1
\end{equation}
is by now a well studied equation and proved to be a very interesting object for research due to a number of unexpected properties and effects. A rather complete monograph on it \cite{VazSmooth} is available nowadays. In particular, unusual mathematical behaviors appear in the so-called subcritical range $0<m<m_c$, with $m_c$ introduced in \eqref{mc}. In this range, finite time extinction takes place at least for integrable initial conditions, and it was a quest for establishing the dynamics near the extinction time that led to the so-called \emph{special solutions with anomalous exponents} (or shortly \emph{anomalous solutions}) noticed formally in \cite{Ki93} and introduced rigorously by Peletier and Zhang \cite{PZ95}. These are solutions to Eq. \eqref{FDE} in backward self-similar form
\begin{equation}\label{anomalous}
U(x,t)=(T-t)^{\alpha(m)}f(|x|(T-t)^{\beta(m)}, \qquad {\rm with} \ (1-m)\alpha(m)=2\beta(m)+1,
\end{equation}
where $T>0$ is the extinction time and the profile $f$ satisfies the following decay rate
\begin{equation}\label{decay.anom}
f(\xi)\sim C\xi^{-(N-2)/m}, \qquad {\rm as} \ \xi\to\infty
\end{equation}
The branch of self-similar exponents $\alpha(m)>0$ (and thus also $\beta(m)$) and the profile $f$ with behavior at infinity as in \eqref{decay.anom} are shown to be unique for a fixed $m\in(0,m_c)$ in \cite{PZ95} or \cite[Section 7.2]{VazSmooth}. These solutions were named \emph{anomalous} since, in contrast with other well-established self-similar solutions, their self-similar exponents are not explicit and not obtained through an algebraic calculation, but through a dynamical system technique (phase plane analysis) leading also to the existence and uniqueness of the profile. These solutions proved to be of an utmost importance for the dynamics of the equation: it was shown by Galaktionov and Peletier \cite{GP97} that for any \emph{radially symmetric initial condition} $u_0$ with suitable regularity and decaying sufficiently fast (more precisely, such that $u_0(r)=O(r^{-q})$ as $r\to\infty$ for some $q>2/(1-m)$), the unique solution to Eq. \eqref{FDE} having initial condition $u_0$ approaches the anomalous solution $U(x,t)$ in \eqref{anomalous} near the finite extinction time. We can thus say that the special solutions in \eqref{anomalous} describe the asymptotic behavior of Eq. \eqref{FDE} for radial solutions. With respect to non-radial solutions, the problem of the asymptotic behavior is more complicated, and it was established in \cite{dPS01} that, for the particular case of another important exponent in the theory of fast diffusion, the \emph{Sobolev exponent}
\begin{equation}\label{ms}
m_s=\frac{N-2}{N+2}\in(0,m_c),
\end{equation}
there are non-radial asymptotic profiles. Up to our knowledge, a full description of all possible asymptotic profiles of Eq. \eqref{FDE} with non-radially symmetric data is still an open problem for $m\in(0,m_c)$ with $m\neq m_s$. However, the only case when the anomalous solution is explicit is exactly for $m=m_s$ (a case related to the \emph{Yamabe flow} in Riemannian geometry), with exponents
$$
\beta(m_s)=0, \ \alpha(m_s)=\frac{1}{1-m_s}=\frac{N+2}{4}.
$$

The second effect, competing with the fast diffusion in the dynamics of Eq. \eqref{eq1}, is the (weighted) reaction. The standard (homogeneous) reaction-diffusion equation
\begin{equation}\label{eq1.hom}
u_t=\Delta u^m+u^p, \qquad m>0, \ p>1
\end{equation}
has been thoroughly investigated for the slow diffusion case $m>1$. The main feature of this equation is the blow-up in finite time of its solutions, that is, the existence of a time $T\in(0,\infty)$ such that $u(t)\in L^{\infty}(\real^N)$ for $t\in(0,T)$ but $u(T)\not\in L^{\infty}(\real^N)$. For $m>1$, many properties of solutions to Eq. \eqref{eq1.hom} are known, including when finite time blow-up takes place, blow-up rates and profiles \cite{S4}. In particular, a relevant fact is that there exists a critical exponent known as \emph{the Fujita exponent}
\begin{equation}\label{Fujita}
p_F=m+\frac{2}{N}
\end{equation}
such that for $m>1$ and any $p\in(1,p_F)$, all the non-trivial solutions to Eq. \eqref{eq1.hom} blow up in finite time. Concerning the fast diffusion range $m\in(0,1)$ in Eq. \eqref{eq1.hom}, it is established in \cite{Qi93, MM95} that for $m_c<m<1$, the exponent $p_F>1$ given in \eqref{Fujita} still plays the role of a Fujita-type exponent in the sense described above: for $1<p<p_F$ any non-trivial solution still blows up in finite time. Later on, Guo and Guo \cite{GuoGuo01} studied the range $p>p_F$ when global solutions may exist, establishing the required decay rate as $|x|\to\infty$ of the initial condition in order for the solution to be global, and giving the large time behavior of these global solutions. Maing\'e \cite{Ma08} extends the results related to the connection between the decay rate as $|x|\to\infty$ of the data $u_0$ and finite time blow-up to the whole fast diffusion range $m\in(0,1)$. Understanding the blow-up behavior of solutions when the reaction term is weighted, that is, for equations such as
\begin{equation}\label{eq1.gen}
u_t=\Delta u^m+a(x)u^p, \qquad m>1
\end{equation}
was a problem addressed since long and studied (for suitable weights $a(x)$, not necessarily pure powers) in papers as \cite{BK87, BL89, Pi97, Pi98}. We quote here the paper by Suzuki \cite{Su02} where the pure power weight $a(x)=|x|^{\sigma}$, $\sigma>0$ is considered and both the Fujita-type exponent and the second critical exponent related to blow-up (the critical decay of $u_0(x)$ as $|x|\to\infty$ when $p>p_F$ splitting between blow-up solutions and global solutions) are given, we recall here the first of them:
\begin{equation}\label{Fujita_sigma}
p_{F,\sigma}=m+\frac{2+\sigma}{N}.
\end{equation}
Recently, the authors started a long-term program of understanding and classifying the blow-up profiles to equations such as Eq. \eqref{eq1} and a number of results related to the range $m>1$ have been obtained in a series of papers \cite{IS19, IS20b, IS21, IS21c}. In these papers, considering always $m>1$, the blow-up profiles in the form of backward self-similar solutions are classified for reaction exponents $p\in[1,m]$ and any $\sigma>0$, and the results were sometimes rather unexpected and strongly depending both on the relation between $m$ and $p$ and on the magnitude of $\sigma$. In all these cases, the blow-up profiles are compactly supported, presenting interfaces, but the behavior at $x=0$ may vary and, what is most important, \emph{the blow-up set varies} with $\sigma$. In particular, a fact that should be emphasized is that, in many cases, even $x=0$ is a blow-up point, despite the fact that, formally, there is no reaction at all at the origin. However, we have noticed that fast diffusion with weighted reaction has been considered only very seldom. Qi \cite{Qi98} considers a reaction with a weight including also a time dependence
$$
u_t=\Delta u^m+t^s|x|^{\sigma}u^p, \qquad m>m_c, \ s\geq0, \ \sigma>-2
$$
and proves that there exists a Fujita-type exponent with an explicit expression depending on $m$, $s$ and $\sigma$ which reduces to $p_{F,\sigma}$ for $s=0$. Later, localized weights $a(x)$ with compact support have been considered in Eq. \eqref{eq1.gen} and analyzed in \cite{BZZ11}, noticing that the Fujita-type exponents changes into $p_F=m+1$.

After this discussion of the two effects present in Eq. \eqref{eq1} and of the precedents of the problem, let us get closer to the main contributions of this paper.

\medskip

\noindent \textbf{Main results.} The previous detailed discussion about precedents shows that we are dealing in Eq. \eqref{eq1} with a \emph{competition} between two terms generating typically \emph{two totally opposite effects}: on the one hand the diffusion tends to finite time extinction (that is, mass tending to zero), and on the other hand the reaction typically tends to finite time blow-up (mass tending to infinity). It is thus an interesting question to find a combination of exponents giving a \emph{perfect balance} between them, leading to dynamics that do not either vanish or blow up in finite time. This is achieved by restricting ourselves to the subcritical range $m\in(0,m_c)$ and letting $\sigma$ as in \eqref{sig} and then obtaining that Eq. \eqref{eq1} has so-called \emph{eternal solutions}, that is, global solutions having an exponential dependence on the time variable and which can be defined even for any $t\in\real$ (that is, also backward in time). The terminology stems from Daskalopoulous and Sesum \cite{DS06} where such solutions have been constructed for the logarithmic diffusion equation in $\real^2$ (which in differential geometry is a particular case of the Ricci flow), although they have been noticed previously for Eq. \eqref{FDE} with exactly $m=m_c$ in \cite{GPV00}. More precisely, we are looking for self-similar solutions of the following form
\begin{equation}\label{expSS}
u(x,t)=e^{\alpha t}f(|x|e^{-\beta t}), \qquad \alpha, \ \beta\in\real,
\end{equation}
for some suitable profile $f$ solving the ordinary differential equation
\begin{equation}\label{ODE}
(f^m)''(\xi)+\frac{N-1}{\xi}(f^m)'(\xi)-\alpha f(\xi)+\beta\xi f'(\xi)+\xi^{\sigma}f(\xi)^p=0.
\end{equation}
Existence of such self-similar eternal solutions proved to be a very seldom phenomenon, due to the fact that parabolic equations typically enjoy smoothing effects not allowing for the time variable to move independently in both directions; however, they appear sometimes in critical cases of exponents that split the general dynamics of the equations into two regimes, such as for example $m=m_c$ for Eq. \eqref{FDE} or $p=p_c=2N/(N+1)$ for the parabolic $p$-Laplacian equation \cite{ISV08} and more recently Lauren\ced{c}ot and one of the authors constructed eternal solutions for a fast diffusion equation involving gradient absorption \cite{IL13}.

Going back to our Eq. \eqref{eq1}, we are able to obtain eternal solutions exactly for the critical value of $\sigma$ in \eqref{sig}. We state our main result below.
\begin{theorem}\label{th.1}
Let $N\geq3$, $m\in(0,m_c)$, $p>1$ and $\sigma$ defined in \eqref{sig}. Then there exist \textbf{unique exponents} $\alpha$ and $\beta\in\real$ such that
\begin{equation}\label{relation}
\alpha=\frac{2}{m-1}\beta
\end{equation}
and a \textbf{unique profile} $f$ such that
$$
U(x,t)=e^{\alpha t}f(\xi), \qquad \xi=|x|e^{-\beta t}
$$
is a solution to Eq. \eqref{eq1} in the sense that the profile $f(\xi)$ solves the ordinary differential equation \eqref{ODE}. The profile $f$ satisfies
\begin{equation}\label{profile.prop}
f(0)=A>0, \ f'(0)=0, \qquad f(\xi)\sim C\xi^{(2-N)/m}, \ {\rm as} \ \xi\to\infty,
\end{equation}
and the \textbf{signs} of the self-similarity exponents \textbf{change exactly at $m=m_s$}, independent of $p$: $\alpha>0$ and $\beta<0$ for $m\in(0,m_s)$, respectively $\alpha<0$ and $\beta>0$ for $m\in(m_s,m_c)$. Moreover, $U(t)\in L^1(\real^N)$ for any $t>0$, and these solutions cease to exist for $m\in[m_c,1)$.
\end{theorem}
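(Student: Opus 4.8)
The plan is to first insert the self-similar ansatz \eqref{expSS} into Eq. \eqref{eq1} and collect the time-dependent exponential factors. A direct computation shows that the three terms $\partial_t u$, $\Delta u^m$ and $|x|^\sigma u^p$ carry the factors $e^{\alpha t}$, $e^{(m\alpha-2\beta)t}$ and $e^{(p\alpha+\sigma\beta)t}$ respectively; requiring these to coincide forces both $\alpha(1-m)=-2\beta$, which is exactly \eqref{relation}, and $\alpha(1-p)=\sigma\beta$, and the two relations are mutually consistent precisely because $\sigma$ is fixed at the critical value \eqref{sig}. This reduces the PDE to the autonomous profile equation \eqref{ODE} and, crucially, leaves $\beta$ (equivalently $\alpha$) as a single free parameter. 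I would also record here the one-parameter rescaling $f(\xi)\mapsto Kf(K^{(1-m)/2}\xi)$, $K>0$, which leaves \eqref{ODE} invariant for fixed $\alpha,\beta$; this is why the profile can be unique only up to normalization of $f(0)=A$, and it is what makes the subsequent analysis essentially two-dimensional.

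Next I would recast \eqref{ODE} as an autonomous dynamical system: writing $v=(f^m)'$ and passing to the logarithmic variable $\eta=\ln\xi$, together with a rescaling that quotients out the invariance just mentioned, turns the problem into a planar (or three-dimensional but reducible) autonomous system whose critical points encode the admissible local behaviors. I expect one critical point $P_0$ to correspond to the regular behavior $f(0)=A>0$, $f'(0)=0$ demanded in \eqref{profile.prop}, and one critical point $P_\infty$ to correspond to the decay $f(\xi)\sim C\xi^{(2-N)/m}$; indeed this is the profile for which $f^m=C^m\xi^{2-N}$ is radially harmonic, so that the diffusion operator annihilates the leading term at infinity while the transport and reaction terms fix the constant. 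Linearizing at $P_0$ and $P_\infty$ and computing the eigenvalues gives the dimensions of the local unstable and stable manifolds and confirms the claimed decay exponent.

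The heart of the argument is then a shooting in the parameter $\beta$: the regular solutions emanating from $P_0$ form (after quotienting the rescaling) a one-parameter orbit, and I would track where it goes as $\beta$ ranges over $\real$, showing that for $\beta$ on one side the trajectory leaves the admissible region in one way (for instance, the profile vanishes at a finite $\xi$) while on the other side it behaves oppositely, so that a continuity/intermediate-value argument produces at least one $\beta^*$ for which the orbit lands on the stable manifold of $P_\infty$. Uniqueness of $\beta^*$ and of the orbit would then follow from a monotonicity (Lyapunov-type or sign-preserving) property of the flow in $\beta$. Having the connection, the decay in \eqref{profile.prop} is read off from the local expansion at $P_\infty$, and the integrability $U(t)\in L^1(\real^N)$ reduces to $\int_0^\infty f(\xi)\xi^{N-1}\,\rd\xi<\infty$, whose integrand behaves like $\xi^{(2-N)/m+N-1}$; this is finite exactly when $(N-2)/m>N$, i.e. $m<m_c$, which is also where the construction breaks down for $m\in[m_c,1)$. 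For the sign statement I would exploit that at $m=m_s$ the relation \eqref{relation} forces $\alpha=\beta=0$, so the eternal solution degenerates into a stationary solution of $\Delta u^m+|x|^\sigma u^p=0$, a Hardy--Sobolev / Lane--Emden type equation at the critical exponent, pinning $\beta^*(m_s)=0$; continuity and monotonicity of $\beta^*$ in $m$ then give $\beta^*<0$ for $m\in(0,m_s)$ and $\beta^*>0$ for $m\in(m_s,m_c)$, and the signs of $\alpha$ follow from \eqref{relation} since $2/(m-1)<0$.

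The main obstacle is the global connection step together with its uniqueness: the local analysis at the critical points is routine linearization, but proving that the orbit leaving $P_0$ reaches precisely $P_\infty$ for one and only one value of $\beta$ — the genuinely \emph{anomalous} determination of the exponents — requires controlling the flow globally and establishing monotone dependence on $\beta$, which is typically the most technical part of such phase-plane constructions. A secondary difficulty is to make the non-existence claim for $m\in[m_c,1)$ rigorous at the level of the dynamical system, rather than merely through the failure of integrability.
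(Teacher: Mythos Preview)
Your overall architecture---reduce to a planar autonomous system, identify hyperbolic critical points encoding the boundary behaviors in \eqref{profile.prop}, shoot in the free exponent, and use monotonicity for uniqueness---matches the paper's, and your remark on the scaling invariance is exactly why the problem is two-dimensional. But there are two concrete gaps.

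First, your sign-change argument contains a logical error: the relation \eqref{relation}, i.e.\ $\alpha(m-1)=2\beta$, does \emph{not} force $\alpha=\beta=0$ at $m=m_s$. It is a one-parameter linear constraint valid for every $m\in(0,1)$ and singles out nothing at $m_s$. That $\alpha^*(m_s)=0$ is a genuine outcome of the global analysis, not an algebraic identity, so you cannot use it as an anchor for a continuity-in-$m$ argument without first proving it by other means. The paper does not establish monotonicity of the exponent in $m$; instead it treats the two ranges separately, showing directly (Lemma~\ref{lem.nonposit}) that for $m\in(m_s,m_c)$ no connection exists with $\alpha>0$, and then obtaining one with $\alpha<0$ via a sign-reversal symmetry of the system.

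Second, and more structurally, your sketch omits the third finite critical point $P_2$ (at $Y=-2/(1-m)$, existing only for $m<m_c$), which is the engine of the whole argument. The extremal behaviors in the shooting are \emph{not} ``the profile vanishes at a finite $\xi$'' versus the opposite; rather, for small $K$ the orbit entering $P_1$ comes from $P_2$ (Lemma~\ref{lem.small}), while for large $K$ the orbit leaving $P_0$ enters $P_2$ (Lemma~\ref{lem.large}). The latter is possible only because $P_2$ changes from unstable to stable as $K$ increases, and by Lemma~\ref{lem.P2} this happens \emph{only} when $m\in(0,m_s)$. For $m\in(m_s,m_c)$ the point $P_2$ is unstable for every $K$, which obstructs the connection with $\alpha>0$ and is precisely the mechanism behind the sign change. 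Without $P_2$ you have neither the correct endpoint configurations for the shooting nor any explanation of the dichotomy at $m_s$.
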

Let us notice here that the behavior of the profile as $\xi\to\infty$ in \eqref{profile.prop} is the same as the decay \eqref{decay.anom} of the anomalous solutions for the fast diffusion equation \eqref{FDE}, but the dynamics is totally different: the solutions in Theorem \ref{th.1} are global and eternal, they do not either extinguish or blow up in finite time, as it is obvious from their form. This shows that none of the two terms is dominant over the other one in Eq. \eqref{eq1}.

\medskip

\noindent \textbf{Consequences of the change of signs at $m=m_s$}. A very interesting feature is the \emph{change of sign of both exponents} at $m=m_s$. This is a significant difference with respect to the anomalous solutions \eqref{anomalous} to the standard fast diffusion equation, where only exponent $\beta(m)$ changes sign at $m=m_s$, but $\alpha(m)>0$ for any $m\in(0,m_c)$. This change of sign also brings some striking consequences for the qualitative behavior of the solutions, in various aspects

$\bullet$ \emph{Rate of the exponential decay as $t\to\infty$}. On the one hand for $m\in(m_c,m_s)$ we have $\alpha<0$ and $\beta>0$, thus for a fixed $|x|=r\in(0,\infty)$ we have $\xi=|x|e^{-\beta t}\to0$ as $t\to\infty$, whence we get the asymptotic exponential decay
\begin{equation}\label{behmc}
U(x,t)\sim e^{\alpha t}f(0)=Ae^{\alpha t}, \qquad {\rm as} \ t\to\infty.
\end{equation}
On the other hand, for $m\in(0,m_s)$ we have $\alpha>0$, $\beta<0$, hence $\xi=|x|e^{-\beta t}\to0$ as $t\to\infty$ and, taking into account the tail of the profiles as $\xi\to\infty$ we get for any $x\neq0$ that
\begin{equation}\label{beh0}
\begin{split}
U(x,t)\sim Ce^{\alpha t}(|x|e^{-\beta t})^{(2-N)/m}&=C|x|^{(2-N)/m}\exp\left(\alpha+\frac{N-2}{m}\beta t\right)\\&=C|x|^{(2-N)/m}\exp\left(\frac{N(m-m_c)\beta}{m(m-1)}t\right)
\end{split}
\end{equation}
as $t\to\infty$, which is again decreasing in $t$ since the coefficient inside the last exponential is negative. For fixed $x\in\real^N\setminus\{0\}$, in both cases the solutions decay exponentially as $t\to\infty$ but with different rates.

$\bullet$ \emph{Form of the profiles}: by (at least formally) evaluating \eqref{ODE} at $\xi=0$ we deduce that the profiles have a minimum point at the origin if $\alpha>0$ and a maximum point at the origin if $\alpha<0$. Moreover, if $\alpha<0$ we cannot have minima at any $\xi_0>0$: at such a minimum point we would get from \eqref{ODE} that
$$
(f^m)''(\xi_0)-\alpha f(\xi_0)+\xi_0^{\sigma}f(\xi_0)^p=0
$$
and a contradiction since all the terms above are positive, which proves that the profiles are decreasing for $\xi\in(0,\infty)$. This leads to the following \emph{striking difference} between the geometry of the solutions given by \eqref{expSS}: for $\alpha>0$, that is $m\in(0,m_s)$, the maximum point of the solution moves towards $x=0$ as $t\to\infty$, leading to the formation of a \emph{boundary layer} near the origin, while for $\alpha<0$, that is $m\in(m_s,m_c)$, solutions are just decreasing at any $t>0$.

$\bullet$ \emph{Evolution of the mass in opposite way}. Since $U(t)\in L^1(\real^N)$ for any $t>0$, we can define the mass of the solution at time $t$ by $M(t)=\|U(t)\|_{L^1(\real^N)}$. We can thus relate this mass to the integral of the profile by the following calculation based on an obvious change of variable:
$$
M(t)=\int_{\real^N}e^{\alpha t}f(|x|e^{-\beta t})\,dx=e^{(\alpha+N\beta)t}\int_{\real^N}f(\xi)\,d\xi,
$$
and we notice that
$$
\alpha+N\beta=\left(1+\frac{N(m-1)}{2}\right)\alpha=\frac{N(m-m_c)}{2}\alpha,
$$
which is positive for $\alpha<0$ (that is, $m\in(m_s,m_c)$) and negative for $\alpha>0$ (that is, $m\in(0,m_s)$). We infer that for $m\in(0,m_s)$ solutions $U$ in Theorem \ref{th.1} \emph{lose mass} as $t>0$ increases (an effect showing that the fast diffusion is a bit stronger in this range) while for $m\in(m_s,m_c)$ solutions $U$ in Theorem \ref{th.1} \emph{gain mass} as $t>0$ increases (an effect showing that the reaction is a bit stronger in this range).

$\bullet$ \emph{Explicit stationary solutions at $m=m_s$}. All the previous analysis shows that $m=m_s$ is a kind of bifurcation between two regimes with different properties, and in the middle, exactly at $m=m_s$ (and any $p>1$) we have \emph{stationary solutions} that will be made explicit in Subsection \ref{subsec.stat}. For now, we plot in Figure \ref{fig1} the evolution of two self-similar eternal solutions with respect to time, one from each range $\alpha>0$ and $\alpha<0$, showing the contrast between their properties as explained above.

\begin{figure}[ht!]
  % Requires \usepackage{graphicx}
  \begin{center}
  \subfigure[$\alpha>0$]{\includegraphics[width=7.5cm,height=6cm]{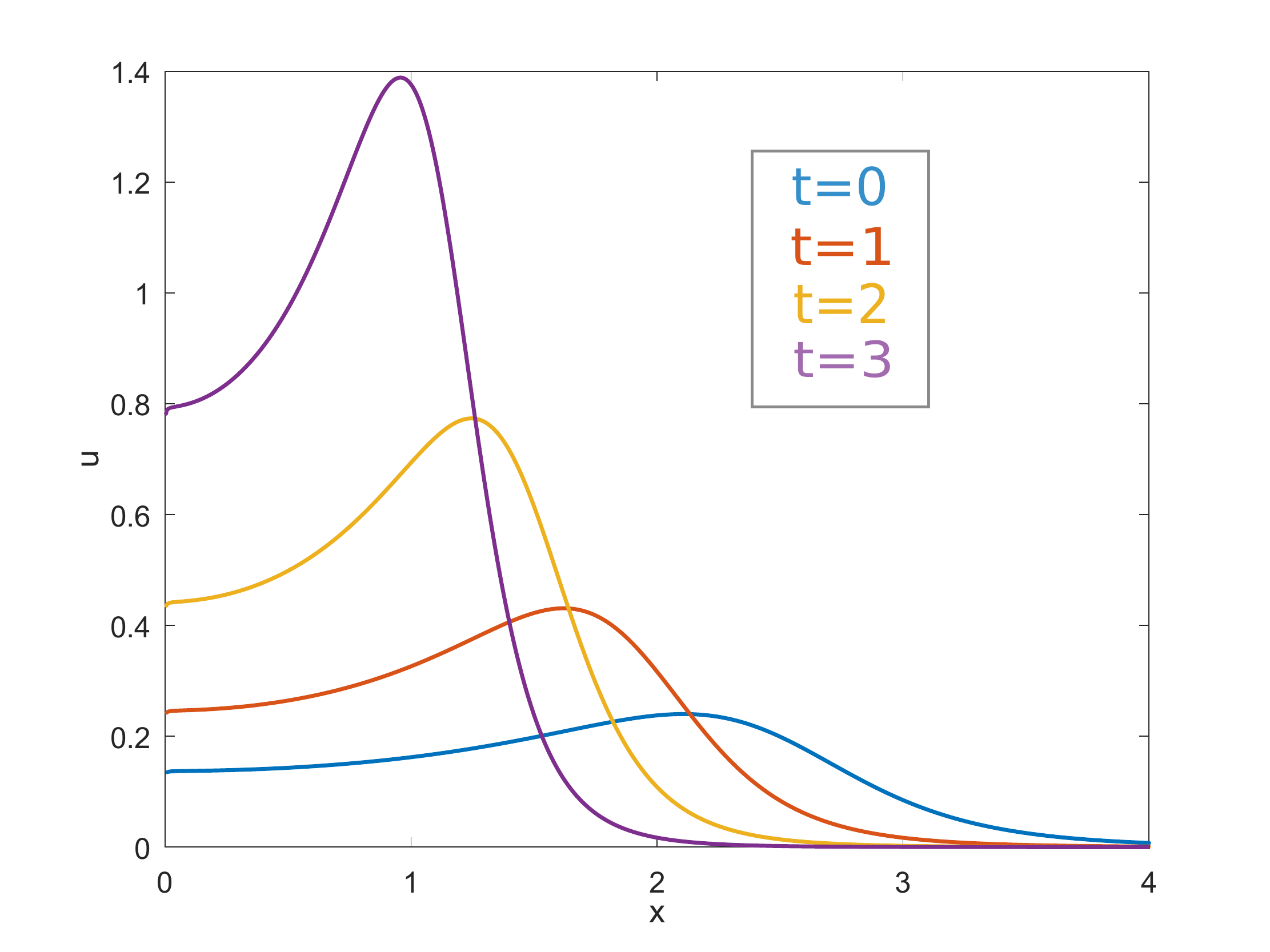}}
  \subfigure[$\alpha<0$]{\includegraphics[width=7.5cm,height=6cm]{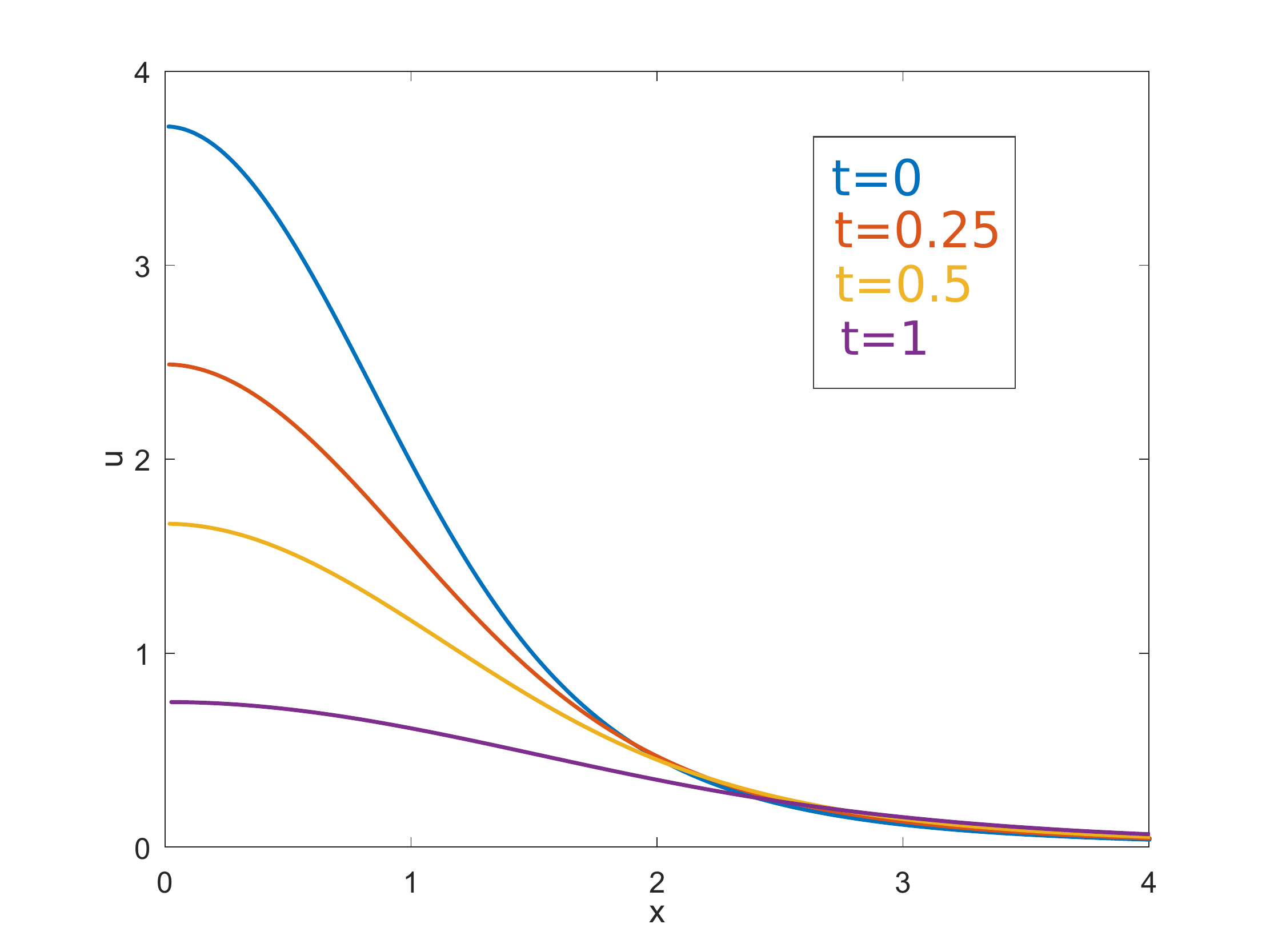}}
  \end{center}
  \caption{Eternal solutions at different times. Experiment for $N=3$, $p=2$ and $m=0.1$, respectively $m=4/15$}\label{fig1}
\end{figure}

In a recent paper \cite{IS21b} the authors proved existence and uniqueness of eternal solutions to Eq. \eqref{eq1} but with $m>1$ and $p<1$ and the same value \eqref{sig} of $\sigma$. Despite the fact that the equation is algebraically the same, the present problem is qualitatively very different: in the range $m>1$ we have slow diffusion instead of fast, and the fact that $m>p$ in \cite{IS21b} leads to compactly supported profiles (finite speed of propagation), while in the present work we deal with infinite speed of propagation and thus profiles with tails as $\xi\to\infty$. Despite these differences, in some technical parts of some of the proofs we will borrow analysis done in \cite{IS21b} in order to shorten the presentation. Let us stress here again that the eternal solutions obtained in \cite{IS21b} have always $\alpha>0$ and $\beta>0$.

\medskip

\noindent \textbf{Structure of the paper}. We divide the present work into three sections, apart from the Introduction. In Section \ref{sec.local} we construct an autonomous dynamical system associated to Eq. \eqref{ODE} and study it locally, in a neighborhood of each of its critical points. The global analysis of the phase plane is the subject of the longer Section \ref{sec.global}, which is at its turn divided into several subsections and will contain the proof of Theorem \ref{th.1}. Finally, in Section \ref{sec.appendix} we gather several particular facts in order to complete the presentation: a number of explicit or semi-explicit solutions to Eq. \eqref{ODE} and a self-map between radially symmetric solutions to Eq. \eqref{eq1}, generalizing a self-map for the fast diffusion equation \eqref{FDE}.

\section{The phase plane. Local analysis}\label{sec.local}

In this section we transform Eq. \eqref{ODE} into an autonomous dynamical system and analyze its behavior near the critical points. To this end, we have to fix one of the two possible signs for the exponent $\alpha$, and we will work with $\alpha>0$ (and thus $\beta<0$) in order to use the similarity in some technical steps with the proofs in \cite{IS21b}. With this convention, we consider the same change of variable as in \cite{IS21b} (inspired in fact by the one used in \cite{ISV08}) by letting
\begin{equation}\label{change}
X=\frac{\alpha}{2m}\xi^2f(\xi)^{1-m}, \qquad Y=\xi f'(\xi)f^{-1}(\xi), \qquad \frac{d}{d\eta}=\xi\frac{d}{d\xi},
\end{equation}
leading after straightforward calculations to the following autonomous dynamical system
\begin{equation}\label{PPSyst}
\left\{\begin{array}{ll}\dot{X}=X(2+(1-m)Y),\\ \dot{Y}=-mY^2-(N-2)Y+2X+(1-m)XY-KX^{(p-m)/(1-m)},\end{array}\right.
\end{equation}
where the derivative is taken with respect to the new independent variable $\eta$ introduced in \eqref{change} and which varies depending on the parameter
\begin{equation}\label{param}
K=\frac{1}{m}\left(\frac{2m}{\alpha}\right)^{(p-m)/(1-m)}.
\end{equation}
We notice that $X\geq0$, that the line $X=0$ is invariant for the system \eqref{PPSyst} and $Y$ might take any real value. The critical points in the finite part of the phase plane are
$$
P_0=(0,0), \qquad P_1=\left(0,-\frac{N-2}{m}\right), \qquad P_2=(X(P_2),Y(P_2)),
$$
where the critical point $P_2$ only exists for $m\in(0,m_c)$ and has the coordinates
\begin{equation}\label{coordP2}
X(P_2)=\left[\frac{2N(m_c-m)}{K(1-m)^2}\right]^{(1-m)/(p-m)}, \qquad Y(P_2)=-\frac{2}{1-m}.
\end{equation}
From now on, we restrict ourselves to the subcritical range $m\in(0,m_c)$ and perform the local analysis of the system near these points.

\subsection{Local analysis of the finite critical points}\label{subsec.finite}

As we shall see in the sequel, the three finite critical points are the most important ones for the analysis. We study them one by one below.
\begin{lemma}[Local analysis near $P_0$]\label{lem.P0}
The critical point $P_0$ is a saddle point. There exists a unique orbit going out of it into the phase plane, and the profiles contained in it satisfy $f(0)=A>0$, $f'(0)=0$.
\end{lemma}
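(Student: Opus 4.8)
The plan is to read off the nature of $P_0$ from the linearization of \eqref{PPSyst} and then transport the resulting local phase-plane picture back to the profile $f$ through the change of variables \eqref{change}. First I would compute the Jacobian of the vector field at $P_0=(0,0)$. The only delicate term is $KX^{(p-m)/(1-m)}$ in the second equation: since $p>1$ forces the exponent $(p-m)/(1-m)>1$, its $X$-derivative vanishes at $X=0$ and so it does not contribute to the linear part. The Jacobian at $P_0$ is therefore lower triangular,
\begin{equation*}
J(P_0)=\begin{pmatrix} 2 & 0 \\ 2 & 2-N \end{pmatrix},
\end{equation*}
with eigenvalues $\lambda_1=2>0$ and $\lambda_2=2-N<0$ (recall $N\geq3$). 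The opposite signs give that $P_0$ is a saddle point.

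Next I would identify the orbit leaving $P_0$. The positive eigenvalue $\lambda_1=2$ has eigenvector $(N,2)$, which points into the half-plane $X>0$, while the negative eigenvalue $\lambda_2=2-N$ has eigenvector $(0,1)$, tangent to the invariant line $X=0$. Thus the one-dimensional unstable manifold meets $X>0$ in a single branch, and along $X=0$ the flow runs into $P_0$; since the whole phase plane is confined to $X\geq0$, this yields exactly one orbit emanating from $P_0$ into the interior $X>0$. By the stable/unstable manifold theorem this orbit is unique and tangent to $(N,2)$ at $P_0$.

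Finally I would translate this back to $f$. Because $d/d\eta=\xi\,d/d\xi$, I may take $\eta=\ln\xi$, so $\xi\to0^+$ corresponds to $\eta\to-\infty$; hence the orbit going out of $P_0$ is precisely the one whose profile is traced as $\xi$ increases from $0$. On the unstable manifold the leading behavior is $X(\eta)\sim X_0e^{2\eta}=X_0\xi^2$ and $Y\sim\frac{2}{N}X\sim\frac{2X_0}{N}\xi^2$ for some $X_0>0$. Inserting these into \eqref{change}: from $X=\frac{\alpha}{2m}\xi^2f^{1-m}$ I get $f(\xi)^{1-m}\to 2mX_0/\alpha$, hence $f(0)=A>0$ with $A=(2mX_0/\alpha)^{1/(1-m)}$; and from $Y=\xi f'/f$ I get $f'(\xi)/f(\xi)\sim\frac{2X_0}{N}\xi$, so $f'(\xi)\sim\frac{2X_0A}{N}\xi\to0$, giving $f'(0)=0$. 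I would also remark that \eqref{ODE} is invariant under the scaling $f(\xi)\mapsto\lambda^{2/(1-m)}f(\lambda\xi)$ exactly because of the critical choice \eqref{sig} of $\sigma$, and that this scaling fixes the phase-plane curve while shifting $A$, which explains why the single unstable orbit encodes the whole one-parameter family of admissible profiles.

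I expect the main obstacle to be the rigorous passage from the local manifold asymptotics to the pointwise statements $f(0)=A>0$ and $f'(0)=0$: one must justify that the genuine leading-order expansion on the unstable manifold holds (not merely the tangency direction) and control the behavior of $f$ near the singular endpoint $\xi=0$ of \eqref{ODE}, where the coefficient $(N-1)/\xi$ blows up. The vanishing of the $X^{(p-m)/(1-m)}$-term in the linearization, which hinges on $p>1$, is the other point that needs to be checked carefully.
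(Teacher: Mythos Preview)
Your proposal is correct and follows essentially the same route as the paper: linearize \eqref{PPSyst} at $P_0$, obtain the eigenvalues $2$ and $2-N$ with eigenvectors $(1,2/N)$ and $(0,1)$, identify the unique unstable branch entering $\{X>0\}$, and then read off $f(0)=A>0$, $f'(0)=0$ via \eqref{change}. The only cosmetic difference is that the paper phrases the last step as ``$X/Y\sim N/2$ and integrate'', whereas you use the leading asymptotics $X\sim X_0e^{2\eta}$ directly; these are equivalent, and your added remark on the scaling invariance nicely explains why one orbit carries the full one-parameter family $A>0$.
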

\begin{proof}
The proof is completely identical to the proof of Lemma 2.1 in \cite{IS21b}. We give a sketch for the sake of completeness. It is straightforward to check that the linearization of the system \eqref{PPSyst} in a neighborhood of $P_0$ has a matrix with eigenvalues $\lambda_1=2$, $\lambda_2=2-N$ and corresponding eigenvectors $e_1=(1,2/N)$, $e_2=(0,1)$, being thus a saddle point. We are interested in the orbit going out of $P_0$ tangent to $e_1$, which contains profiles satisfying $X/Y\sim N/2$. By replacing $X$, $Y$ by their expressions in \eqref{change} and integrating, we readily get the claimed behavior of the profiles.
\end{proof}
For the critical point $P_1$ we already notice an important difference with respect to the analysis performed in \cite{IS21b}.
\begin{lemma}[Local analysis near $P_1$]\label{lem.P1}
The critical point $P_1$ is also a saddle point for $m\in(0,m_c)$. There exists a unique orbit entering it and coming from the positive part of the phase plane. This orbit contains profiles such that
\begin{equation}\label{beh.P1}
f(\xi)\sim C\xi^{-(N-2)/m}, \qquad {\rm as} \ \xi\to\infty, \ \ C>0 \ {\rm free \ constant}.
\end{equation}
\end{lemma}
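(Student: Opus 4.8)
The plan is to linearize the system \eqref{PPSyst} at $P_1=(0,-(N-2)/m)$ and read off its hyperbolic structure. Writing the right-hand sides as $F=X(2+(1-m)Y)$ and $G=-mY^2-(N-2)Y+2X+(1-m)XY-KX^{(p-m)/(1-m)}$, the only delicate entry of the Jacobian is $\partial G/\partial X$, which contains the term $-K\frac{p-m}{1-m}X^{(p-1)/(1-m)}$; since $p>1$ and $m<1$ the exponent $(p-1)/(1-m)$ is strictly positive, so this term vanishes at $X=0$ and the vector field is $C^1$ there, allowing the stable manifold theorem to apply. Using $N(1-m_c)=2$ one simplifies $2+(1-m)Y|_{P_1}=N(m-m_c)/m$, and the Jacobian becomes lower triangular with diagonal entries
\begin{equation*}
\lambda_1=\frac{N(m-m_c)}{m}, \qquad \lambda_2=N-2.
\end{equation*}
For $m\in(0,m_c)$ one has $\lambda_1<0$ and, since $N\geq3$, $\lambda_2>0$, so the eigenvalues have opposite signs and $P_1$ is a saddle. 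This is precisely the announced difference with \cite{IS21b}: in the range $m>1$ treated there, $m-m_c>0$ forces $\lambda_1>0$, so $P_1$ has a different character.

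The decisive geometric point is the location of the eigendirections. The unstable eigenvalue $\lambda_2=N-2$ has eigenvector $(0,1)$, which is tangent to the invariant line $X=0$; hence the entire unstable manifold of $P_1$ is contained in $\{X=0\}$ and carries no profiles from the interior. The stable eigenvalue $\lambda_1$ has an eigenvector with a nonzero $X$-component (from the triangular form the top row of $J-\lambda_1 I$ vanishes, so $v_1$ may be taken nonzero, using $\lambda_1\neq\lambda_2$). By the stable manifold theorem the stable manifold is a one-dimensional $C^1$ curve through $P_1$ tangent to this eigenvector, and exactly one of its two branches lies in the admissible half-plane $X>0$. This yields the unique orbit entering $P_1$ from the positive part of the phase plane, establishing the uniqueness claim.

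Finally I would translate the convergence along this orbit back to the profile. As $\eta\to+\infty$ the orbit satisfies $X\to0$ and $Y\to-(N-2)/m$. Inserting $f\sim C\xi^{-(N-2)/m}$ into $X=\frac{\alpha}{2m}\xi^2f^{1-m}$ gives $X\sim\mathrm{const}\cdot\xi^{N(m-m_c)/m}$, whose exponent is negative for $m<m_c$; thus $X\to0$ forces $\xi\to\infty$, consistent with $\eta=\ln\xi+\mathrm{const}\to+\infty$. Integrating the relation $Y=\xi f'/f=\frac{d(\ln f)}{d(\ln\xi)}\to-(N-2)/m$ then produces the tail $f(\xi)\sim C\xi^{-(N-2)/m}$ as $\xi\to\infty$ claimed in \eqref{beh.P1}. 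The freedom of the constant $C>0$ originates in the invariance of \eqref{ODE}, at the critical value \eqref{sig} of $\sigma$, under the scaling $f(\xi)\mapsto\mu^{2/(m-1)}f(\xi/\mu)$, which multiplies the leading coefficient by a nontrivial power of $\mu$ — this is the self-map exploited in the appendix. I expect the genuine technical obstacle to be this last step: passing rigorously from the bare limit $Y\to-(N-2)/m$ to the sharp power-law tail with a truly free multiplicative constant requires controlling the rate of approach to $P_1$ dictated by $\lambda_1$ and integrating the resulting estimate, rather than merely using the limiting value of $Y$.
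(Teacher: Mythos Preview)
Your proof is correct and follows essentially the same route as the paper: linearize at $P_1$, read off the eigenvalues $N(m-m_c)/m<0$ and $N-2>0$, identify the one-dimensional stable manifold as the unique orbit entering from $\{X>0\}$, integrate $Y\to-(N-2)/m$ to recover the tail, and check via $X\to0$ that the limit is taken as $\xi\to\infty$. Your worry in the last paragraph is not a real obstacle: the hyperbolic saddle forces $Y+(N-2)/m=O(e^{\lambda_1\eta})$, which is integrable in $\eta=\ln\xi$ and upgrades the crude $f=\xi^{-(N-2)/m+o(1)}$ to the sharp asymptotic with a genuine multiplicative constant---the paper treats this step at the same level of informality you do.
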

\begin{proof}
The linearization of the system \eqref{PPSyst} near $P_1$ has the matrix
$$M(P_1)=\left(
  \begin{array}{cc}
    \frac{mN-N+2}{m} & 0 \\
    \frac{mN-N+2}{m} & N-2 \\
  \end{array}
\right),$$
with eigenvalues $\lambda_1=(mN-N+2)/m<0$, $\lambda_2=N-2>0$, thus is a saddle point for $m<m_c$. The unique orbit entering $P_1$ has $X\to0$, $Y\sim-(N-2)/m$, and the latter implies
$$
\frac{f'(\xi)}{f(\xi)}\sim-\frac{N-2}{m\xi},
$$
which gives \eqref{beh.P1} by direct integration. We recall now the definition of $X$ in \eqref{change} to infer that on this orbit entering $P_1$ we have
$$
X(\xi)\sim C\xi^{(mN-N+2)/m}, \qquad X(\xi)\to0,
$$
which, together with the fact that $mN-N+2=N(m-m_c)<0$, shows that the limit in the local behavior is taken as $\xi\to\infty$ and the proof is complete.
\end{proof}
We are finally left with the most interesting critical point, which is completely new with respect to the case $m>1$, $p<1$ analyzed in \cite{IS21b} and whose analysis is more involved. Recall here the Sobolev exponent $m_s$ defined in \eqref{ms}.
\begin{lemma}[Local analysis near $P_2$]\label{lem.P2}
The type of the critical point $P_2$ depends on the value of $m$ as follows:

(i) For $m\in[m_s,m_c)$, the critical point $P_2$ is either an unstable node or an unstable focus, depending on the value of the parameter $K\in(0,\infty)$ defined in \eqref{param}.

(ii) For $m\in(0,m_s)$, the critical point $P_2$ can be: an unstable node, an unstable focus, a center, a stable focus and a stable node, all them in dependence of the parameter $K\in(0,\infty)$.

In both cases, the orbits going out of $P_2$ (respectively entering $P_2$) contain profiles having the following limit behavior
\begin{equation}\label{beh.P2}
f(\xi)\sim\left[\frac{2m}{\alpha}X(P_2)\right]^{1/(1-m)}\xi^{-2/(1-m)},
\end{equation}
taken as $\xi\to0$ if the orbit goes out (profiles with a vertical asymptote at $\xi=0$) or as $\xi\to\infty$ if the orbit enters $P_2$ (profiles with a different tail at infinity).
\end{lemma}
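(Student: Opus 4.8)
The plan is to classify $P_2$ through the linearization of \eqref{PPSyst}, exploiting the algebraic relations that define its coordinates to keep the computation short. I would first compute the Jacobian $J(P_2)$. The decisive simplification is that the defining value $Y(P_2)=-2/(1-m)$ makes $2+(1-m)Y(P_2)=0$, so the entry $\partial_X\dot X=2+(1-m)Y$ vanishes at $P_2$, while $\partial_Y\dot X=(1-m)X(P_2)$. In the second row I would substitute $Y(P_2)$ into $\partial_Y\dot Y=-2mY-(N-2)+(1-m)X$ and into $\partial_X\dot Y=2+(1-m)Y-K\frac{p-m}{1-m}X^{(p-m)/(1-m)-1}$, then use the identity $K\,X(P_2)^{(p-m)/(1-m)}=2N(m_c-m)/(1-m)^2$ — which is exactly what fixes $X(P_2)$ in \eqref{coordP2} — to eliminate $K$ from the expressions.

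This leaves a matrix with zero top-left entry, so its determinant and trace are immediate. A short computation gives $D=\frac{2N(p-m)(m_c-m)}{(1-m)^2}$, which is strictly positive for every $m\in(0,m_c)$ and $p>1$; hence $P_2$ is never a saddle and can only be a node, a focus, or a center. Writing $\frac{4m}{1-m}-(N-2)=\frac{(N+2)(m-m_s)}{1-m}$, the trace becomes $T=\frac{(N+2)(m-m_s)}{1-m}+(1-m)X(P_2)$, which cleanly isolates the role of $m_s$. Since by \eqref{coordP2} $X(P_2)$ is a continuous, strictly decreasing function of $K$ mapping $(0,\infty)$ onto $(0,\infty)$, the positive term $(1-m)X(P_2)$ sweeps all of $(0,\infty)$ as $K$ varies. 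For $m\in[m_s,m_c)$ the constant part is nonnegative, so $T>0$ for every $K$: the point is unstable, and the sign of the discriminant $T^2-4D$, tunable through $K$, distinguishes an unstable node ($T^2>4D$) from an unstable focus ($T^2<4D$). For $m\in(0,m_s)$ the constant part is negative, so as $K$ grows $T$ decreases through zero, producing the unstable regime ($T>0$), the center threshold ($T=0$), and the stable regime ($T<0$), with $T^2-4D$ again separating node from focus inside each regime.

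For the profile behavior I would integrate \eqref{change} along an orbit converging to $P_2$. From $Y\to-2/(1-m)$ and $Y=\xi f'/f$ one gets $f'/f\sim-\frac{2}{(1-m)\xi}$, hence $f\sim c\,\xi^{-2/(1-m)}$, and the constant $c$ is pinned by $X\to X(P_2)$ together with $X=\frac{\alpha}{2m}\xi^2 f^{1-m}$, yielding exactly \eqref{beh.P2}. To decide between $\xi\to0$ and $\xi\to\infty$ I use $\frac{d}{d\eta}=\xi\frac{d}{d\xi}$, i.e. $\eta=\log\xi$: an orbit leaving $P_2$ has $\eta\to-\infty$, i.e. $\xi\to0$ (a vertical asymptote), while an orbit entering $P_2$ has $\eta\to+\infty$, i.e. $\xi\to\infty$ (a tail).

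The step I expect to be most delicate is the center case $T=0$: purely imaginary eigenvalues of the linearization do not by themselves force a genuine nonlinear center rather than a weak focus, so confirming it rigorously would require extra structure — a first integral or a reversibility symmetry of \eqref{PPSyst} at the critical value of $K$, or the computation of a Lyapunov focal value. Relatedly, pinning down exactly which subtypes are realized for a fixed triple $(N,p,m)$ reduces to comparing the attainable range of $T$, bounded below by $\frac{(N+2)(m-m_s)}{1-m}$, against $\pm2\sqrt{D}$, so the interplay of all the parameters — not $K$ alone — governs the finer dichotomy between foci and nodes in the stable regime.
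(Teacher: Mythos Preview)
Your proof is correct and follows essentially the same approach as the paper: compute the linearization at $P_2$, analyze the eigenvalues (you via trace and determinant, the paper via the explicit formula \eqref{interm1}), and track how the monotone dependence of $X(P_2)$ on $K$ drives the sign of the trace $T=L/(1-m)$. Your use of $\eta=\log\xi$ to distinguish $\xi\to0$ from $\xi\to\infty$ is a clean alternative to the paper's contradiction argument, and your caution about the center case is well-placed---the paper itself hedges there, saying $P_2$ ``can be either a center or a focus'' at that value of $K$.
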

\begin{proof}
The linearization of the system \eqref{PPSyst} near $P_2$ has the matrix
$$M(P_2)=\left(
  \begin{array}{cc}
    0 & (1-m)X(P_2) \\
    -\frac{K(p-m)}{1-m}X(P_2)^{(p-1)/(1-m)} & \frac{(N+2)(m-m_s)}{1-m}+(1-m)X(P_2) \\
  \end{array}
\right),$$
where we recall that $X(P_2)$ depends on the parameter $K$ and is defined in \eqref{coordP2}. Letting
$$
L:=(m-1)^2X(P_2)+(N+2)(m-m_s),
$$
we find that the eigenvalues of the matrix $M(P_2)$ are
\begin{equation}\label{interm1}
\lambda_{1,2}=\frac{1}{2(1-m)}\left[L\pm\sqrt{L^2-8N(p-m)(m_c-m)}\right].
\end{equation}
Let us consider first $m\in[m_s,m_c)$. In this case, taking into account \eqref{coordP2}, we find that $L>0$ for any $K>0$, thus the two eigenvalues in \eqref{interm1} are either real and positive or complex conjugated with positive real part (equal to $L$). It follows that the critical point $P_2$ is always unstable: either an unstable node or an unstable focus. Going now to the range $m\in(0,m_s)$, we notice that $L$ might change sign in dependence on the parameter $K$ and this introduces a big difference in the analysis. Indeed, noticing in \eqref{coordP2} that $X(P_2)$ depends on $K$ in a decreasing way, with $X(P_2)\to\infty$ as $K\to0$ and $X(P_2)\to0$ as $K\to\infty$, we get that

$\bullet$ for $K>0$ sufficiently small, $X(P_2)$ is big and $L>0$, in fact $L\to\infty$ as $K\to0$, thus the eigenvalues in \eqref{interm1} are both real positive numbers. We get an \emph{unstable node}.

$\bullet$ there exists a value of $K$ for which $L=L^*=\sqrt{8N(p-m)(m_c-m)}>0$. Above this value of $K$ the critical point $P_2$ becomes an \emph{unstable focus}, as $\lambda_1$, $\lambda_2$ become complex numbers with positive real parts.

$\bullet$ there exists a value of $K$, call it $K^*$, for which $L=0$, hence $\lambda_{1,2}$ become purely imaginary complex numbers. This means that the critical point $P_2$ can be \emph{either a center or a focus} for this precise instance of $K$.

$\bullet$ for $K>K^*$ but sufficiently close to $K^*$, $X(P_2)$ continues to decrease as $K$ increases and we have $L<0$, thus for such values of $K$ the eigenvalues in \eqref{interm1} are complex with negative real part, which means that $P_2$ is a \emph{stable focus}.

$\bullet$ finally, in some cases it is possible that $L=-L^*$ for some value of $K$, and if this happens, for higher values of $K$ we obtain $L^2>8N(p-m)(m_c-m)$ and we get two real, negative eigenvalues in \eqref{interm1}. In this case $P_2$ is a \emph{stable node}. This final case is possible only if there exists some value of $K\in(0,\infty)$ such that
$$
L=-L^*=-\sqrt{8N(p-m)(m_c-m)}>(N+2)(m-m_s),
$$
which after taking squares and performing easy calculations leads to the following condition on $m$, $N$ and $p$
$$
(m-1)^2(N-2)^2-8Np(m_c-m)\geq0.
$$
However, in the subsequent analysis the difference between nodes and foci will not be relevant. Finally, the orbits either entering or going out of $P_2$ have $X\to X(P_2)$ and $Y\to Y(P_2)$, the former of these together with the expression of $X$ in \eqref{change} leading directly to the local behavior \eqref{beh.P2}. Such behavior can be taken either as $\xi\to0$ (on orbits going out of $P_2$) or as $\xi\to\infty$ (on orbits entering $P_2$), but the intermediate case of a limit $\xi\to\xi_0$ for some $\xi_0\in(0,\infty)$ is discarded easily by a contradiction with the fact that $Y(\xi)\to-2/(1-m)$.
\end{proof}
This change of the character of $P_2$ from an unstable point into a stable point will be the decisive feature allowing for the existence of good orbits and thus profiles with  behavior as in \eqref{profile.prop} for $m\in(0,m_s)$ (in our framework with exponent $\alpha>0$), while the fact that this point does not change for $m\in(m_s,m_c)$ will be an obstacle for existence.

\subsection{Critical points at infinity}\label{subsec.inf}

This analysis follows closely the corresponding one performed for the case $m>1$ and $p<1$ in \cite{IS21b}, thus at some points we will skip some technical steps and refer to this previous work. The sign of $m+p-2$ makes a difference, as it follows below. We pass to the Poincar\'e sphere by following the theory in \cite[Section 3.10]{Pe} and introducing the new variables $(\overline{X},\overline{Y},W)$ such that
$$
X=\frac{\overline{X}}{W}, \qquad Y=\frac{\overline{Y}}{W}
$$
and recall that the critical points at infinity of the system \eqref{PPSyst} lie on the equator of the sphere, that is, they are points $(\overline{X},\overline{Y},0)$ with $\overline{X}^2+\overline{Y}^2=1$. Let now $P(X,Y)$, $Q(X,Y)$ be the right-hand sides of the two equations of the system \eqref{PPSyst}. The difference with respect to the sign of $m+p-2$ leads to the following three cases:

\medskip

\noindent $\bullet$ if $m+p>2$, that is $(p-m)/(1-m)>2$, then the highest order term in the expressions of $P(X,Y)$, $Q(X,Y)$ is $X^{(p-m)/(1-m)}$. We can thus let
$$
P^*(\overline{X},\overline{Y},W)=W^{(p-m)/(1-m)}P\left(\frac{\overline{X}}{W},\frac{\overline{Y}}{W}\right), \ \  Q^*(\overline{X},\overline{Y},W)=W^{(p-m)/(1-m)}Q\left(\frac{\overline{X}}{W},\frac{\overline{Y}}{W}\right)
$$
and follow the theory in \cite[Section 3.10]{Pe} to get that the critical points at infinity are given by the zeros of the expression obtained by letting $W=0$ in the following calculation
$$
[\overline{X}Q^*(\overline{X},\overline{Y},W)-\overline{Y}P^*(\overline{X},\overline{Y},W)]\Big|_{W=0}=-K\overline{X}^{(p+1-2m)/(1-m)},
$$
where the detailed calculations are given in \cite[Section 5]{IS21b}. We thus get two critical points at infinity that on the Poincar\'e sphere have coordinates $Q_2=(0,1,0)$, $Q_3=(0,-1,0)$.

\medskip

\noindent $\bullet$ if $m+p<2$, that is $(p-m)/(1-m)<2$, then the highest order terms in the expressions of $P(X,Y)$, $Q(X,Y)$ are all quadratic. We can thus set
\begin{equation}\label{interm5}
P^*(\overline{X},\overline{Y},W)=W^{2}P\left(\frac{\overline{X}}{W},\frac{\overline{Y}}{W}\right), \ \  Q^*(\overline{X},\overline{Y},W)=W^{2}Q\left(\frac{\overline{X}}{W},\frac{\overline{Y}}{W}\right)
\end{equation}
and follow the theory in \cite[Section 3.10]{Pe} to get that the critical points at infinity are given by the zeros of the expression obtained by letting $W=0$ in the following calculation
$$
[\overline{X}Q^*(\overline{X},\overline{Y},W)-\overline{Y}P^*(\overline{X},\overline{Y},W)]\Big|_{W=0}=-\overline{X}\overline{Y}(\overline{Y}-(1-m)\overline{X}),
$$
where the detailed calculations are given in \cite[Section 2.2]{IS21b}. We thus obtain, apart from the critical points $Q_2$, $Q_3$ identified in the previous case, two new critical points at infinity
\begin{equation}\label{crit.infty}
Q_1=(1,0,0), \qquad Q_4=\left(\frac{1}{\sqrt{1+(1-m)^2}},\frac{1-m}{\sqrt{1+(1-m)^2}},0\right).
\end{equation}

\medskip

\noindent $\bullet$ if $m+p=2$, that is $(p-m)/(1-m)=2$, we set again \eqref{interm5}, but in this case one more term contributes to the critical points of infinity, since we have
$$
[\overline{X}Q^*(\overline{X},\overline{Y},W)-\overline{Y}P^*(\overline{X},\overline{Y},W)]\Big|_{W=0}=-\overline{X}\left[\overline{Y}^2-(1-m)\overline{X}\overline{Y}+K\overline{X}^2\right],
$$
where the detailed calculations are given in \cite[Section 3.1]{IS21b}. Checking for the zeros of the right-hand side of the previous calculation and letting $\overline{Y}=\lambda\overline{X}$, we obtain, apart from the critical points $Q_2$ and $Q_3$, two more critical points
\begin{equation}\label{crit.infty.bis}
Q_1=\left(\frac{1}{\sqrt{1+y_1^2}},\frac{y_1}{\sqrt{1+y_1^2}},0\right), \qquad Q_4=\left(\frac{1}{\sqrt{1+y_2^2}},\frac{y_2}{\sqrt{1+y_2^2}},0\right),
\end{equation}
where
\begin{equation}\label{y12}
y_{1,2}=\frac{(1-m)\pm\sqrt{(m-1)^2-4K}}{2},
\end{equation}
are obtained as the roots of the equation $\lambda^2+(m-1)\lambda+K=0$, provided $K\leq(m-1)^2/4$. All the details are given in \cite[Section 3.1]{IS21b}.

We perform the local analysis of the system \eqref{PPSyst} in a neighborhood of these points below.
\begin{lemma}[Local analysis near $Q_2$ and $Q_3$]\label{lem.Q23}
The critical point $Q_2=(0,1,0)$ is an unstable node and the critical point $Q_3=(0,-1,0)$ is a stable node. The orbits either going out of $Q_2$ or entering $Q_3$ contain profiles having a change of sign at some finite point $\xi_0\in[0,\infty)$ with the local behavior near $Q_2$,
$$
f(\xi)\sim C(\xi-\xi_0)^{1/m}, \qquad {\rm as} \ \xi\to\xi_0, \ \xi>\xi_0,
$$
respectively the local behavior near $Q_3$
$$
f(\xi)\sim C(\xi_0-\xi)^{1/m}, \qquad {\rm as} \ \xi\to\xi_0, \ \xi<\xi_0.
$$
\end{lemma}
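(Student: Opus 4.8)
The plan is to desingularize $Q_2$ and $Q_3$, which sit on the $Y$-axis of the equator, by passing to the local chart adapted to the $Y$-direction at infinity in the sense of \cite[Section 3.10]{Pe}. Concretely, I would set $X=u/z$, $Y=1/z$, which maps a neighborhood of $Q_2=(0,1,0)$ (where $Y\to+\infty$) to a neighborhood of the origin $(u,z)=(0,0)$ reached with $z>0$, and likewise $Q_3=(0,-1,0)$ (where $Y\to-\infty$) to the origin reached with $z<0$. Recalling that the independent variable of \eqref{PPSyst} is $\eta=\ln\xi$, I would differentiate $u=X/Y$ and $z=1/Y$ along the flow and rescale by $d\tau=d\eta/z$ in order to clear the negative powers of $z$; a direct computation then gives the desingularized system
\begin{equation*}
u'=u-(1-m)u^2+Nuz-2u^2z+K\,u^{(p+1-2m)/(1-m)}z^{(2-m-p)/(1-m)},\qquad z'=mz+(N-2)z^2-(1-m)uz-2uz^2+K\,u^{(p-m)/(1-m)}z^{(3-2m-p)/(1-m)}.
\end{equation*}

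Next I would read off the local structure at the origin. The line $\{u=0\}$ is the image of the invariant line $X=0$, hence invariant, with reduced flow $z'=mz+(N-2)z^2+\cdots$, while transversally $u'=u+\cdots$. Using $X=u/z$, the two weighted-reaction terms can be rewritten as $KX^{(p+1-2m)/(1-m)}z^{3}$ and $KX^{(p-m)/(1-m)}z^{3}$, so along the orbits approaching $Q_2$ or $Q_3$, where $X\to0$, and on the invariant axis $\{u=0\}$, they are genuinely higher order; since moreover $p>1$ gives $(p-m)/(1-m)>1$, they do not enter the linear part. Thus the governing linearization at the origin is $\mathrm{diag}(1,m)$, whose eigenvalues $1$ and $m$ are real, positive and distinct (as $0<m<1$), so the origin is an unstable node for the $\tau$-flow. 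Because $d\tau=d\eta/z$ preserves the orientation of orbits when $z>0$ and reverses it when $z<0$, this makes $Q_2$ (reached with $z\to0^+$) an unstable node and $Q_3$ (reached with $z\to0^-$) a stable node of \eqref{PPSyst}, as claimed.

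For the profiles, on an orbit leaving $Q_2$ one has $X\to0$ and $Y\to+\infty$. Since $Y=\xi f'/f$, the blow-up of $Y$ forces $f$ to vanish at a finite $\xi_0>0$; integrating the leading balance $z\sim m(\eta-\eta_0)$ obtained from $z'=mz$ and $d\eta=z\,d\tau$ (equivalently $Y\sim\xi_0/[m(\xi-\xi_0)]$), and recalling $\eta=\ln\xi$, yields $f(\xi)\sim C(\xi-\xi_0)^{1/m}$ as $\xi\to\xi_0^+$. The exponent $1/m$ records that the pressure $f^m$ has a transversal (linear) zero at $\xi_0$, so that $f$, and with it the solution $U$, changes sign there; the symmetric computation with $Y\to-\infty$ gives $f(\xi)\sim C(\xi_0-\xi)^{1/m}$ near $Q_3$.

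I expect the main obstacle to be the only non-polynomial ingredient, the weighted-reaction term $-KX^{(p-m)/(1-m)}$: in the infinity chart it may carry a \emph{negative} power of $z$ precisely when $m+p>2$, so that the desingularized field is not a priori smooth at the origin. The crux is to show that it is nonetheless irrelevant for the linearization, which I would do through the invariance of $\{u=0\}$ and the fact that on the approaching orbits the term is $O(X^{(p-m)/(1-m)}z^{3})$, together with the inequality $(p-m)/(1-m)>1$ coming from $p>1$ --- exactly the place where the case distinction on the sign of $m+p-2$ set up before the lemma, and the detailed computations borrowed from \cite{IS21b}, come into play.
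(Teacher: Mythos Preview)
Your approach is sound and is in fact the rigorous chart computation that the paper itself only alludes to in its closing sentence (``A \emph{fully rigorous proof} can be done by using the theory in \cite[Theorem 2, Section 3.10]{Pe} in line with the proof of \cite[Lemma 2.4]{IS21b}''). The paper takes a different, explicitly \emph{formal} route: it keeps the dominant terms in $dY/dX$ \emph{including} the reaction, writes
\[
\frac{dY}{dX}\sim-\frac{mY^2+KX^{(p-m)/(1-m)}}{(1-m)XY},
\]
integrates this first-order ODE to $Y^2+\tfrac{2K}{m+p}X^{(p-m)/(1-m)}\sim CX^{-2m/(1-m)}$, and from $Y^2\to\infty$ deduces first $X\to0$ and then $Y\sim CX^{-m/(1-m)}$; the profile asymptotics follow by undoing \eqref{change}. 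The virtue of that argument is that the troublesome reaction term is never separated out---the conclusion $X\to0$ is an \emph{output} of the integrated balance---so the sign of $m+p-2$ never intervenes. Your desingularization, by contrast, makes the node structure (linear part $\mathrm{diag}(1,m)$, hence $Q_2$ unstable and $Q_3$ stable after the orientation flip) completely transparent, and your derivation $z\sim m(\eta-\eta_0)$, $Y\sim\xi_0/[m(\xi-\xi_0)]$, $f\sim C(\xi-\xi_0)^{1/m}$ is clean. The one soft spot you already flag is slightly circular as written: you invoke ``$X\to0$ on the approaching orbits'' to discard the reaction term, but approaching $Q_2$ on the equator only gives $X/Y\to0$, and $X\to0$ is really a \emph{consequence} of the node picture (generic orbits leave tangent to the slow $z$-axis since $m<1$, whence $X=u/z\sim z^{1/m-1}\to0$). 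This is fixable along the lines you sketch, and is presumably what \cite[Lemma 2.4]{IS21b} carries out. One small slip: for $Q_2$ the statement allows $\xi_0=0$, which your argument excludes; the paper notes explicitly that $\xi\to0$ is admissible on orbits out of $Q_2$ but not on orbits into $Q_3$.
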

\begin{proof}[Formal proof]
We give here a more formal proof, which allows us to understand how the local behavior comes out. We know that when approaching the points $Q_2$ and $Q_3$, by the definition of the coordinates on the Poincar\'e sphere, we have $Y\to\pm\infty$ and $Y/X\to\pm\infty$. Thus, we go to the system \eqref{PPSyst} and estimate the first order approximation of $dY/dX$ by neglecting the lower order terms (under the previous assumptions) and maintaining only the dominating (or possibly dominating) ones in $P(X,Y)$, $Q(X,Y)$ to get
$$
\frac{dY}{dX}\sim-\frac{mY^2+KX^{(p-m)/(1-m)}}{(1-m)XY},
$$
and we infer by integration that the trajectories of the system satisfy
\begin{equation}\label{interm2}
Y^2+\frac{2K}{m+p}X^{(p-m)/(1-m)}\sim CX^{-2m/(1-m)}, \qquad C\in\real \ {\rm free \ constant}
\end{equation}
in a neighborhood of the points $Q_2$ and $Q_3$. Since $Y^2\to\infty$, \eqref{interm2} forces $X\to0$ on the orbits when approaching $Q_2$ and $Q_3$ and we finally get the approximation
\begin{equation}\label{interm3}
Y\sim CX^{-m/(1-m)}, \qquad C\in\real \ {\rm free \ constant},
\end{equation}
where $C>0$ for the orbits going out of $Q_2$ and $C<0$ for the orbits entering $Q_3$. Putting \eqref{interm3} in terms of profiles by using the definitions of $X$ and $Y$ in \eqref{change}, we get
$$
(f^{m-1}f')(\xi)\sim C\xi^{-(m+1)/(1-m)},
$$
which by integration leads to
\begin{equation}\label{interm4}
f(\xi)\sim\left(C_1+C\xi^{-2m/(1-m)}\right)^{1/m}.
\end{equation}
It remains to show that the local behavior in \eqref{interm4} is taken as $\xi\to\xi_0\in(0,\infty)$ for $Q_3$ and $\xi\to\xi_0\in[0,\infty)$ for $Q_2$. This follows from the definition of $X$ and the fact that $X\to0$ in a neighborhood of $Q_2$ or $Q_3$, which means
$$
X(\xi)=\xi^2f(\xi)^{1-m}=\xi^2\left(C_1+C\xi^{-2m/(1-m)}\right)^{(1-m)/m}=\left(C+C_1\xi^{2m/(1-m)}\right)^{(1-m)/m}\to0,
$$
which does not allow taking a limit as $\xi\to\infty$. It is then obvious that the limit $\xi\to0$ is not allowed on the orbits entering $Q_3$, while it can be allowed along trajectories going out of $Q_2$. Finally, the behavior in \eqref{interm4} as $\xi\to\xi_0\in(0,\infty)$ is equivalent to the one in the statement of the Lemma, following an easy discussion on the signs of the constants that is given in detail at the end of \cite[Lemma 2.4]{IS21b}. A \emph{fully rigorous proof} can be done by using the theory in \cite[Theorem 2, Section 3.10]{Pe} in line with the proof of \cite[Lemma 2.4]{IS21b}.
\end{proof}
For the critical points $Q_1$ and $Q_4$ introduced in \eqref{crit.infty} or \eqref{crit.infty.bis}, which only exist if $m+p\leq2$, the situation is different with respect to \cite{IS21b}, but as we shall see, they are not very important for the subsequent analysis of the phase plane.
\begin{lemma}\label{lem.Q1Q4}
The critical point $Q_1$ on the Poincar\'e sphere is a saddle-node according to the theory in \cite[Section 2.11]{Pe} and there is a unique orbit entering this point and coming from the finite part of the phase plane. The critical point $Q_4$ is a stable node. The orbits entering both $Q_1$ and $Q_4$ and coming from the finite part of the phase plane contain profiles having a vertical asymptote at some finite point $\xi_0\in(0,\infty)$, in the sense $f(\xi)\to\infty$ as $\xi\to\xi_0$.
\end{lemma}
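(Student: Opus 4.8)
The statement is only meaningful when $m+p\le2$, since $Q_1$ and $Q_4$ are introduced precisely in that regime via \eqref{crit.infty} and \eqref{crit.infty.bis}; I therefore restrict to $m+p\le 2$ throughout. The plan is to analyze both points through the local chart of the Poincar\'e sphere adapted to equatorial points with $\overline{X}\neq0$, as prescribed in \cite[Section 3.10]{Pe} and carried out for the companion system in \cite[Sections 2.2 and 3.1]{IS21b}. Since $Q_1$ and $Q_4$ both have a nonzero first coordinate, I would project onto the plane tangent to the sphere at $(1,0,0)$ by setting $z=1/X$ and $v=Y/X$, rewriting \eqref{PPSyst} as an autonomous planar system for $(v,z)$ after clearing denominators by a rescaling of the independent variable. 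In these coordinates the equatorial critical points sit at $z=0$ with $v$ equal to the slopes $\overline{Y}/\overline{X}$ read off from \eqref{crit.infty} (namely $v=0$ and $v=1-m$ when $m+p<2$) or from \eqref{crit.infty.bis}--\eqref{y12} (the two positive roots $y_{1,2}$ when $m+p=2$). I would treat the two cases $m+p<2$ and $m+p=2$ in parallel, since the dominant monomials of the right-hand sides differ but the resulting local study is of the same nature.

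The core of the argument is the two linearizations at $z=0$. For $Q_4$ I expect the Jacobian of the $(v,z)$-system to have two eigenvalues with negative real part, giving a stable node; this is a routine computation. The delicate point, and the main obstacle, is $Q_1$: I anticipate that its linearization is degenerate, with one zero eigenvalue, so $Q_1$ is only semi-hyperbolic and its type is not decided by the linear part. To settle this I would invoke the saddle-node classification of \cite[Section 2.11]{Pe}, computing the leading normal-form coefficient along the center direction to confirm that $Q_1$ is indeed a saddle-node and that exactly one orbit enters it from the finite part of the plane (the region $z>0$, $X>0$). In the borderline case $m+p=2$ one must keep track of the constraint $K\le(m-1)^2/4$ required by \eqref{y12} for $Q_1$ and $Q_4$ to exist at all, and verify that under it both slopes $y_{1,2}$ are positive.

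Finally I would translate the local dynamics into information on the profiles. Along any orbit entering $Q_1$ or $Q_4$ from the finite region one has $z=1/X\to0^+$, hence $X\to+\infty$ with $Y/X$ converging to the finite (nonnegative) slope of the point, so that $Y\to+\infty$ as well. Substituting the definitions \eqref{change} and integrating the resulting first-order relation $f'/f^{2-m}\sim c\,\xi$ yields $f^{m-1}/(m-1)\sim (c/2)\,\xi^{2}+C_1$, and since $m-1<0$ the left-hand side tends to $0$ as $f\to\infty$; this forces $\xi\to\xi_0$ with $\xi_0\in(0,\infty)$ determined by $C_1$, i.e. a vertical asymptote $f(\xi)\to\infty$ at finite $\xi_0$. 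For the degenerate slope $v=0$ at $Q_1$ the same conclusion follows once the refined leading asymptotics of $Y$ against $X$ extracted from the saddle-node normal form are inserted. Because these points play no essential role in the global analysis, I would keep this last step brief and defer the routine bookkeeping on the signs of the constants to computations entirely analogous to those in \cite[Section 3.1]{IS21b}.
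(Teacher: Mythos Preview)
Your approach is essentially the paper's: project to the chart $y=Y/X$, $z=1/X$, find $Q_4$ stable by linearization and $Q_1$ a saddle-node via the center-manifold/normal-form computation of \cite[Section 2.11]{Pe}, then read off the profile asymptotics by integrating the limiting relation $Y/X\to c$ (and, for $Q_1$, the refined relation coming from the center manifold). The one technical point you skate over is that for $m+p<2$ the reaction term produces $Kz^{(2-m-p)/(1-m)}$ with exponent in $(0,1)$, so the $(y,z)$-system is not $C^1$ at $z=0$ and a naive Jacobian is illegitimate; the paper cures this with the extra substitution $w=z^{(2-m-p)/(1-m)}$ (yielding the polynomial system \eqref{PPSyst2}) before linearizing, and you should do the same.
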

\begin{proof}
Let us restrict ourselves first to exponents such that $m+p<2$. The local analysis of both points can be performed, according to \cite[Theorem 2, Section 3.10]{Pe}, on the following system (also obtained in \cite[Lemma 2.3]{IS21b})
\begin{equation}\label{PPSyst2}
\left\{\begin{array}{ll}\dot{y}=2w^{(1-m)/(2-m-p)}+(1-m)y-Nyw^{(1-m)/(2-m-p)}-y^2-Kw, \\ \dot{w}=-(2-m-p)yw-\frac{2(2-m-p)}{1-m}w^{1+(1-m)/(2-m-p)},\end{array}\right.
\end{equation}
where $y=Y/X$, $z=1/X$ and $w=z^{(2-m-p)/(1-m)}$. More precisely, the critical point $Q_1$ is topologically equivalent to the critical point $(y,w)=(0,0)$ and the critical point $Q_4$ is topologically equivalent to the critical point $(y,w)=(1-m,0)$ in the system \eqref{PPSyst2}. The linearization of the system \eqref{PPSyst2} near the point $(y,w)=(1-m,0)$ equivalent to $Q_4$ has the matrix
$$M(Q_4)=\left(
  \begin{array}{cc}
    -(1-m) & -K \\
    0 & (1-m)(m+p-2)\\
  \end{array}
\right),$$
with two negative eigenvalues $\lambda_1=-(1-m)$ and $\lambda_2=(1-m)(m+p-2)$, thus $Q_4$ is a stable node. The orbits entering $Q_4$ are characterized by the fact that $Y/X\sim 1-m$ in a neighborhood of $Q_4$, which leads after an integration to
\begin{equation}\label{interm6}
f(\xi)\sim\left(C-\frac{(m-1)^2}{2}\xi^2\right)^{1/(m-1)}, \qquad C>0 \ {\rm free \ constant},
\end{equation}
presenting a vertical asymptote as $\xi\to\xi_0$ for some $\xi_0\in(0,\infty)$ (which can be made explicit in terms of the constant $C>0$) since $m-1<0$. The linearization of the system \eqref{PPSyst2} in a neighborhood of the origin has the matrix
$$M(Q_1)=\left(
  \begin{array}{cc}
    1-m & -K \\
    0 & 0 \\
  \end{array}
\right),$$
with eigenvalues $\lambda_1=1-m>0$ and $\lambda_2=0$. We thus have an unstable manifold and center manifolds (that may not be unique). The analysis of the center manifolds (following \cite[Section 2.12]{Pe}) show that their equation and direction of the flow over them are given by the following
$$
y(w)=\frac{K}{1-m}w+o(w), \qquad \dot{w}=-\frac{K(2-m-p)}{1-m}w^2+o(w^2),
$$
thus all the orbits tangent to some center manifold enter $Q_1$. We thus deduce from \cite[Theorem 1, Section 2.11]{Pe} that the critical point $Q_1$ is a saddle-node, where the "saddle sector" takes the orbits approaching $Q_1$ from the interior of the phase plane, while the "node sector" contains only orbits going out of $Q_1$ on the boundary of the Poincar\'e sphere. It thus follows that there is a unique orbit entering $Q_1$ from the interior of the phase plane, with
$$
y\sim\frac{K}{1-m}w=\frac{K}{1-m}z^{(2-m-p)/(1-m)}
$$
in a neighborhood of it, which writes equivalently
$$
Y\sim\frac{K}{1-m}X^{(p-1)/(1-m)}
$$
and in terms of profiles gives after substitution with the definitions in \eqref{change} and integration
$$
f(\xi)\sim\left(D-\frac{K}{2}\xi^{2(p-1)/(1-m)}\right)^{1/(1-p)}, \qquad D>0 \ {\rm free \ constant},
$$
which produces a vertical asymptote at some finite $\xi_0\in(0,\infty)$, since $1-p<0$. More details about the calculations are given in \cite[Lemma 2.3]{IS21b}.

For the remaining case $m+p=2$, where the critical points $Q_1$ and $Q_4$ are defined in \eqref{crit.infty.bis}, the analysis is very similar, since on orbits near both of them we have $Y/X\sim y_1$ or $Y/X\sim y_2$, with $y_1$, $y_2>0$ defined in \eqref{y12}, which readily lead to a similar vertical asymptotes as the one obtained in \eqref{interm6}. We omit here the details, that are similar to the ones in \cite[Section 3.1]{IS21b}.
\end{proof}

\noindent \textbf{Remark.} Profiles with a vertical asymptote at some finite positive point have been also noticed in the study of Eq. \eqref{eq1.hom} (that is, for $\sigma=0$) with $p>m>1$, see for example Figure 5.1 in \cite[p.214]{S4}.

\section{Global analysis. Proof of Theorem \ref{th.1}}\label{sec.global}

In this section we deduce how the trajectories go inside the phase plane associated to the system \eqref{PPSyst} and we prove Theorem \ref{th.1}. Let us notice that the result of Theorem \ref{th.1} is equivalent to the existence and uniqueness of a \emph{saddle-saddle connection} between $P_0$ and $P_1$. We will prove this next, and the main tool in the proof will be an argument of monotonicity. We divide the steps of the proof into several subsections.

\subsection{Orbits for $K>0$ small}\label{subsec.small}

The first preparatory step deals with the configuration of the phase plane for very small $K>0$ (that is, $X(P_2)$ very large).
\begin{lemma}\label{lem.small}
Let $N\geq3$, $m\in(0,m_c)$ and $p>1$ be fixed and let $\sigma$ as in \eqref{sig}. Then there exists $K_1>0$ (depending on $m$, $N$ and $p$) such that, for any $K\in(0,K_1)$, the unique orbit entering the saddle point $P_1$ in the phase plane associated to the system \eqref{PPSyst} comes from the critical point $P_2$.
\end{lemma}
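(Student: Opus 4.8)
The plan is to realize the claimed connection as a separatrix issuing from the unstable node $P_2$. Recall from Lemma~\ref{lem.P2} that for $K>0$ small $P_2$ is an \emph{unstable node}, hence a source, so it is eligible to be the $\alpha$-limit of the orbit entering $P_1$. I would therefore track the one-parameter family of orbits leaving $P_2$ into the interior of the phase plane and show, by a shooting/connectedness argument, that exactly one of them converges to the saddle $P_1$. Since by Lemma~\ref{lem.P1} the orbit entering $P_1$ from $X>0$ is unique, identifying this separatrix with that orbit proves that it comes from $P_2$.

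First I would fix the geometry. Since $m<m_c$ one checks $Y(P_1)=-(N-2)/m<-2/(1-m)=Y(P_2)$, so $P_1$ lies strictly below the horizontal $X$-nullcline $\{Y=Y(P_2)\}$ while $P_0=(0,0)$ lies strictly above it. Consider the bounded box $B=\{0<X<X(P_2),\ Y(P_1)<Y<Y(P_2)\}$. A direct substitution gives, on the top edge, $\dot Y=\frac{2N(m_c-m)}{(1-m)^2}-KX^{(p-m)/(1-m)}$, which is positive for $0<X<X(P_2)$ and vanishes exactly at $P_2$; on the bottom edge, $\dot Y=X\,\frac{N(m-m_c)}{m}-KX^{(p-m)/(1-m)}<0$; and since $Y<Y(P_2)$ forces $2+(1-m)Y<0$, the first equation of \eqref{PPSyst} yields $\dot X<0$ throughout $B$. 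Hence forward orbits enter $B$ through the right edge, the left edge $X=0$ is invariant, and an orbit can leave $B$ only upward through the top or downward through the bottom. Crucially, $\dot X<0$ in $B$ makes $X$ strictly monotone along orbits, which is the monotonicity argument announced in the paper and rules out periodic orbits and homoclinic loops inside $B$.

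Next I would classify the orbits that leave $P_2$ into $B$. Computing the eigendirections of $M(P_2)$ as $K\to0$ (equivalently $X(P_2)\to\infty$) shows the fast direction is $(1,1)$ and the slow one is essentially horizontal, so a whole branch of the unstable manifold enters the interior of $B$ down and to the left. Analysing the sign of $\dot Y$ inside $B$ shows that the nullcline $\{\dot Y=0\}$ runs from $P_2$ to $P_1$, separating a region just below $P_2$ where $\dot Y<0$ (orbits descend) from the rest of $B$ where $\dot Y>0$ (orbits rise toward the top). Consequently an orbit from $P_2$ either descends to the lower nullcline and exits through the bottom, or turns and exits through the top; both exit classes are open by continuous dependence. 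Granting that both are nonempty, their union cannot exhaust the connected bundle of orbits issuing from $P_2$ into $B$; any orbit in the complement stays in $\overline{B}$ for all forward time, so by boundedness of $B$, the absence of cycles, and the fact that the only interior equilibrium is $P_2$ (from which $X$ is strictly decreasing away), it must have $X\searrow0$ and $\omega$-limit on $\{0\}\times[Y(P_1),Y(P_2)]$, i.e.\ $P_1$, the only equilibrium there. By uniqueness of the stable orbit of $P_1$ this separatrix is the orbit entering $P_1$, which therefore comes from $P_2$.

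The step I expect to be the main obstacle is the nonemptiness of the ``exit-bottom'' class: one must guarantee that some trajectory issuing from the node $P_2$ descends across the whole strip to $\{Y=Y(P_1)\}$ without first crossing the nullcline $\{\dot Y=0\}$ and curving back up. This is exactly where the hypothesis $K<K_1$ is genuinely used: the vertical gap $Y(P_2)-Y(P_1)$ is an $O(1)$ constant, whereas the horizontal room $X(P_2)$ is arbitrarily large for $K$ small, so the fast branch of slope $\approx1$ reaches the lower nullcline at a still positive value of $X$. I would make this rigorous either by a quantitative estimate on the fast branch near $P_2$ for $X(P_2)$ large, or by a comparison argument treating $-KX^{(p-m)/(1-m)}$ as a small perturbation on the relevant bounded region, borrowing the corresponding construction from \cite{IS21b}. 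A secondary point, automatic here, is that no trajectory can escape to the critical points at infinity before being classified, precisely because the entire analysis is confined to the bounded box $B$.
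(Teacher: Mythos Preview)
Your approach is a \emph{forward} shooting argument from the unstable node $P_2$, whereas the paper argues \emph{backward} from the saddle $P_1$ using explicit barriers. The paper does not use a rectangular box or a connectedness/shooting argument at all. Instead it introduces the straight line $l:\,Y=X-X(P_2)+Y(P_2)$ of slope $1$ through $P_2$ and checks by direct computation that, for $K$ small (equivalently $X(P_2)$ large), the flow of \eqref{PPSyst} crosses $l$ in only one direction. Together with the isocline $C=\{\dot Y=0\}$ (which also joins $P_2$ to $P_1$ and is one--sided in the half-plane $\{Y<Y(P_2)\}$), this traps the stable manifold of $P_1$ in a curvilinear triangle with vertices on $P_1$, $P_2$ and the $Y$-axis, entirely contained in region~(II) where $\dot X<0$ and $\dot Y<0$. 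Both coordinates are then monotone along the orbit, so tracing it backward it must terminate at the unique equilibrium on the boundary, namely $P_2$. The smallness of $K$ enters only through the sign of the polynomial $F(X)$ on $l$, which is where the dominating coefficient $m(m-1)^2X(P_2)^2$ appears.

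Your strategy can in principle be made to work, but the gap you yourself identify---nonemptiness of the ``exit-bottom'' class---is real and not peripheral: it is precisely the substitute for the barrier $l$, and filling it rigorously amounts to reproducing the paper's computation in disguise (the fast eigendirection of $M(P_2)$ is asymptotically $(1,1)$, i.e.\ tangent to $l$). There is also a second soft spot: your description of the sign regions of $\dot Y$ inside $B$ is not quite right. The nullcline $C$ runs from $P_2$ to $P_1$, and region~(I) (where $\dot Y>0$) is the set enclosed by $C$, the top edge, and the $Y$-axis, while region~(II) (where $\dot Y<0$) includes the whole bottom edge, not just a neighbourhood of $P_2$. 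Moreover, near $P_2$ the slow eigendirection and the tangent to $C$ both have slope $O(1/X(P_2)^2)$ and one must compare them carefully before asserting which region the slow orbits first enter. None of this is fatal, but the paper's backward-barrier argument sidesteps all of it: once $F(X)>0$ on $l$ is checked, the conclusion is immediate, with no shooting, no openness/connectedness, and no eigenvector asymptotics required.
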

\begin{proof}
Let us consider the line passing through $P_2$
$$
l: Y=X-X(P_2)+Y(P_2).
$$
The direction of the flow of the system \eqref{PPSyst} over the line $l$ is given by the sign of the expression
\begin{equation}\label{interm7}
\begin{split}
F(X)&=mX^2+\frac{2m(1-m)X(P_2)+(N+2)(m-m_s)}{m-1}X\\&+\frac{2N(m_c-m)}{(m-1)^2}\left[\frac{X}{X(P_2)}\right]^{(p-m)/(1-m)}+\frac{1}{(m-1)^2}A(X(P_2)),
\end{split}
\end{equation}
where
\begin{equation}\label{interm8}
A(X(P_2))=m(m-1)^2X(P_2)^2+(1-m)(N+1)(m-m_s)X(P_2)-2N(m_c-m)
\end{equation}
and we recall that $X(P_2)$ is defined (in terms of $K$) in \eqref{coordP2}. Noticing that $X(P_2)\to\infty$ as $K\to0$ and that both in the expression of $F(X)$ and of $A(X(P_2))$ we have $mX^2>0$ and the dominating terms with respect to $X(P_2)$ have positive coefficients in \eqref{interm7} and \eqref{interm8}, we readily get that $F(X)>0$ for any $X\geq0$ provided $X(P_2)$ sufficiently large, that is, $K>0$ sufficiently small. The intersection of the line $l$ with the $Y$ axis is reached at
$$
Y_0=Y(P_2)-X(P_2)<-\frac{N-2}{m}, \qquad {\rm for} \ X(P_2) \ {\rm large},
$$
hence the critical point $P_1$ lies on the same side as the origin with respect to the line $l$. The orbit entering $P_1$ cannot cross the line $l$ from right to left due to the fact that $F(X)>0$. On the other hand, considering the isocline $\dot{Y}=0$ of the system \eqref{PPSyst}, that is, the curve
$$
C: -mY^2-(N-2)Y+2X+(1-m)XY-KX^{(p-m)/(1-m)}=0,
$$
we notice that the curve $C$ connects $P_2$ and $P_1$ and splits the half-plane $\{Y<-2/(1-m)\}$ into two regions that we plot in Figure \ref{fig2} below, which gives a "visual proof" of this Lemma:

\medskip

$\bullet$ one region (I) enclosed by the curve $C$, the line $Y=-2/(1-m)=Y(P_2)$ and the $Y$ axis, in which $\dot{X}<0$, $\dot{Y}>0$, hence $dY/dX<0$ along the trajectories in this region.

$\bullet$ one region (II) lying below the line $Y=-2/(1-m)=Y(P_2)$ and in the exterior of the curve $C$, in which $\dot{X}<0$, $\dot{Y}<0$, hence $dY/dX>0$ along the trajectories in this region.

It readily follows from these signs of $dY/dX$ along the trajectories that the orbit entering $P_1$ comes through the second, exterior region. Moreover, the direction of the flow of the system \eqref{PPSyst} on the part of the curve $C$ which lies in the half-plane $\{Y<-2/(1-m)\}$ is given by the sign of the expression
$$
G(X,Y)=X[2+(1-m)Y]^2-\frac{K(p-m)}{1-m}[2+(1-m)Y]X^{(p-m)/(1-m)}>0,
$$
since $2+(1-m)Y<0$ in the half-plane where we work. Taking into account that the normal direction to the curve $C$ is given by
$$
\overline{n}(X,Y)=\left(2+(1-m)Y-\frac{K(p-m)}{1-m}X^{(p-1)/(1-m)},-2mY-(N-2)+(1-m)X\right)
$$
whose $X$-component is obviously negative in the region where $2+(1-m)Y<0$, it follows that no orbit can cross the part of the curve $C$ which lies in the half-plane $\{Y<-2/(1-m)\}$ from region (I) into region (II) above. We then conclude from this analysis that for such sufficiently small values of $K$ for which $F(X)>0$ in \eqref{interm7}, the orbit entering $P_1$ has to lie completely in the region limited by the part of the curve $C$ contained in the half-plane $\{Y<-2/(1-m)\}$, the line $l$ and the $Y$ axis, as shown in Figure \ref{fig2}. Since in this region (which is a part of region (II)) the components $X$ and $Y$ are monotonic along any trajectory, the orbit entering $P_1$ must come from the only critical point lying on the boundary of this region, which is $P_2$.
\end{proof}

\begin{figure}[ht!]
  % Requires \usepackage{graphicx}
  \begin{center}
  \includegraphics[width=11cm,height=7.5cm]{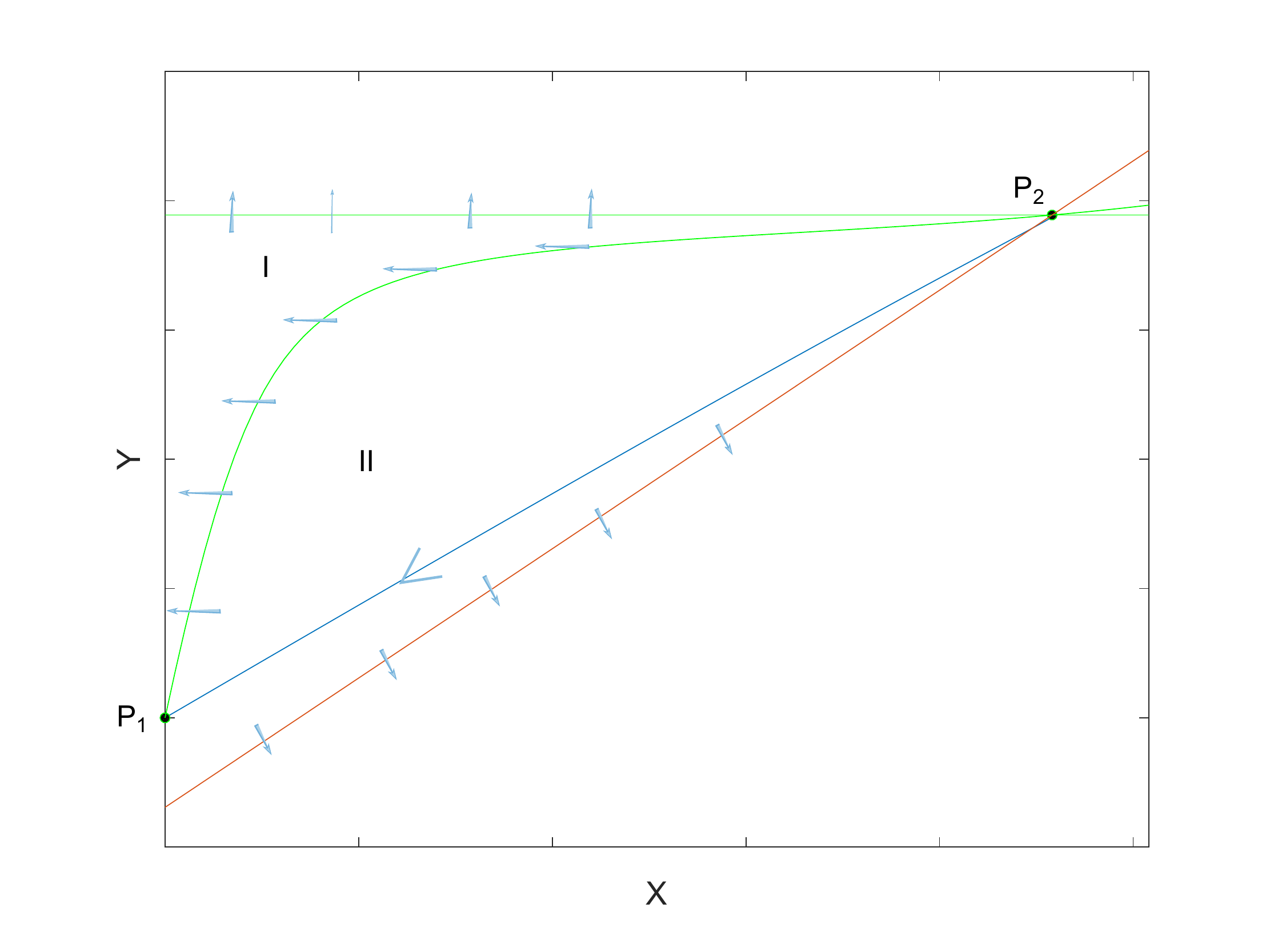}
  \end{center}
  \caption{The orbit connecting $P_2$ to $P_1$ for $K>0$ small}\label{fig2}
\end{figure}

\subsection{Monotonicity with respect to $K>0$}\label{subsec.monot}

For the easiness of the rest of the analysis, we perform a further change of variable that transforms the system \eqref{PPSyst} into a new one. Let us set
\begin{equation}\label{change2}
X=X(P_2)U, \qquad Y=\frac{cV-2}{1-m}, \qquad \eta=d\overline{\eta},
\end{equation}
where
$$
c=\sqrt{\frac{2N(m_c-m)}{m}}, \qquad d=\frac{1-m}{\sqrt{2mN(m_c-m)}}.
$$
We thus obtain a new system in variables $(U,V)$ introduced in \eqref{change2}
\begin{equation}\label{PPSystRen}
\left\{\begin{array}{ll}\dot{U}=C_mUV,\\ \dot{V}=-V^2-C_sV+1+C_{K}UV-U^{(p-m)/(1-m)},\end{array}\right.
\end{equation}
where the derivatives are taken with respect to the new independent variables $\overline{\eta}$ and
\begin{equation}\label{coefficients}
\begin{split}
&C_m=\frac{1-m}{m}, \qquad C_s=\frac{(N+2)(m_s-m)}{\sqrt{2mN(m_c-m)}},\\ &C_K=\frac{(1-m)^{2(p-1)/(p-m)}}{[2N(m_c-m)]^{(m+p-2)/2(p-m)}\sqrt{m}K^{(1-m)/(p-m)}}.
\end{split}
\end{equation}
Let us notice that in the system \eqref{PPSystRen} the critical points in the plane become
$$
P_0=\left(0,\frac{-C_s+\sqrt{C_s^2+4}}{2}\right), \ P_1=\left(0,\frac{-C_s-\sqrt{C_s^2+4}}{2}\right), \ P_2=(1,0)
$$
and an important feature of this system is the fact that $C_s$ changes sign (if moving $m$) at $m=m_s$, a fact that will become essential later. Let us introduce also the following notation: $l_0(K)$ be the (unique) orbit going out of the saddle point $P_0$ and $l_1(K)$ be the (unique) orbit entering the saddle point $P_1$. We are now interested in the change of the direction of the orbits $l_0(K)$ and $l_1(K)$ with respect to the parameter $K>0$. We have the following
\begin{lemma}[Monotonicity lemma]\label{lem.monot}
Let $K_1$, $K_2>0$ such that $K_1<K_2$. Then the orbit $l_0(K_1)$ "stays above" the orbit $l_0(K_2)$ in the half-plane $\{V>0\}$, before the first intersection with the $U$ axis, and the orbit $l_1(K_1)$ "stays above" the orbit $l_0(K_2)$ inside the half-plane $\{V<0\}$, after the last intersection with the $U$ axis. Here "stays above" means that, if for a fixed $U_0>0$ we let $V_1$, $V_2$ be the coordinates of the point on the orbits $l_0(K_1)$, $l_0(K_2)$ (respectively $l_1(K_1)$, $l_1(K_2)$) having $U=U_0$, then $V_1>V_2$ while $V_1>0$ on the orbits $l_0(K_i)$, $i=1,2$ (respectively $V_1>V_2$ while $V_2<0$ for the orbits $l_1(K_i)$, $i=1,2$).
\end{lemma}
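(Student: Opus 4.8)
The plan is to reduce the whole statement to a single scalar comparison for the slope $dV/dU$ and to exploit a very clean dependence on the parameter. First I would divide the two equations of \eqref{PPSystRen}: along any orbit with $U>0$ and $V\neq0$ one has
\[
\frac{dV}{dU}=\frac{-V^2-C_sV+1-U^{(p-m)/(1-m)}}{C_mUV}+\frac{C_K}{C_m},
\]
because the term $C_KUV$ in $\dot V$ divided by $\dot U=C_mUV$ contributes exactly the constant $C_K/C_m$, independent of $(U,V)$. Thus the \emph{entire} dependence on $K$ in the slope field is a single \emph{additive constant}. From the expression for $C_K$ in \eqref{coefficients}, the exponent of $K$ is $-(1-m)/(p-m)<0$ (since $0<m<1<p$), so $C_K$ is strictly decreasing in $K$; hence $K_1<K_2$ gives $C_{K_1}>C_{K_2}$, and the slope field governing $l_0(K_1)$, $l_1(K_1)$ strictly dominates the one governing $l_0(K_2)$, $l_1(K_2)$ at every point with $U>0$.

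Next I would parametrize each relevant arc as a graph $V=V(U)$. This is legitimate because $\dot U=C_mUV$ keeps a fixed sign in each half-plane: $U$ is strictly increasing along orbits in $\{V>0\}$ and strictly decreasing in $\{V<0\}$ (for $U>0$), so $U$ is a monotone parameter on the arcs in question. Crucially, the endpoints $P_0$ and $P_1$ are independent of $K$, so both families share a common boundary value. The only delicate point is the \emph{initial ordering} near the saddle, where the slope field is singular ($U$ in the denominator). I would resolve this by linearization: computing the Jacobian of \eqref{PPSystRen} at $P_0$ (respectively $P_1$) shows it is lower triangular, and the outgoing unstable (respectively incoming stable) eigendirection has slope proportional to $C_K$, namely $C_KV_0/(2V_0+C_s+C_mV_0)$ at $P_0$ and the analogous expression at $P_1$, which is strictly monotone in $C_K$ (the denominator being $K$-independent). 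Consequently $V_1(U)>V_2(U)$ for $U>0$ small; that is, $l_0(K_1)$ emerges above $l_0(K_2)$, and $l_1(K_1)$ approaches $P_1$ from above $l_1(K_2)$.

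Finally I would propagate this ordering by a non-crossing argument that is made especially transparent by the additive-constant structure. If the two graphs first met at some $U_\ast>0$ with $V_1(U_\ast)=V_2(U_\ast)$, the \emph{common} value of $V$ makes the two singular fractions above coincide, so the difference of the slopes there equals the surviving constant $(C_{K_1}-C_{K_2})/C_m>0$; hence $V_1'(U_\ast)>V_2'(U_\ast)$, contradicting the elementary fact that at a first crossing from above the upper graph must have the smaller slope. Therefore no crossing occurs, and $V_1>V_2$ persists throughout $\{V>0\}$ up to the first intersection with the $U$ axis (for $l_0$) and throughout $\{V<0\}$ from $P_1$ back to the last intersection with the $U$ axis (for $l_1$). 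I expect the main obstacle to be precisely the analysis at the saddle endpoints as $U\to0$: initiating the comparison rigorously there requires the eigenvector computation above, after which the remainder is a routine differential-inequality argument. One must also verify along the way that, on each arc, the orbits genuinely stay in the appropriate half-plane, so that the parametrization by $U$ and the fixed sign of $\dot U$ remain valid.
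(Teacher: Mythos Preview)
Your proposal is correct and follows essentially the same route as the paper: both isolate the additive constant $C_K/C_m$ in the slope field $dV/dU$, use the linearization at $P_0$ and $P_1$ to obtain the initial ordering of the orbits via the $K$-dependence of the unstable (resp.\ stable) eigendirection, and then propagate the ordering by a comparison argument. Your non-crossing argument is in fact a slightly more explicit version of what the paper records simply as ``by the comparison theorem'', and your eigendirection slope $C_K V_0/(2V_0+C_s+C_mV_0)$ agrees (after the identity $C_s=(1-b^2)/b$) with the paper's $mb^2C_K/(m+b^2)$.
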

\begin{proof}
We will redo the local analysis of the critical points $P_0$ and $P_1$ in our new system \eqref{PPSystRen} looking for the eigenvectors tangent to the orbits $l_0(K)$, $l_1(K)$ in a neighborhood of these points. In order to simplify the writing, let us consider a generic point $P=(0,b)$ for some $b\in\real$. The linearization of the system \eqref{PPSystRen} near the point $P=(0,b)$ has the matrix
$$
M(0,b)=\left(
         \begin{array}{cc}
           C_mb & 0 \\
           C_kb & -C_s-2b \\
         \end{array}
       \right),
$$
with eigenvalues and corresponding eigenvectors
\begin{equation}\label{interm9}
\lambda_1=C_mb, \ e_1=\left(\frac{C_sm+b(m+1)}{mbC_K},1\right), \qquad \lambda_2=-C_s-2b, \ e_2=(0,1).
\end{equation}
By particularizing $b$ as the $V$-coordinate of the critical points $P_0$, respectively $P_1$ and recalling the local analysis performed in Lemmas \ref{lem.P0} and \ref{lem.P1}, noticing that $b>0$ for $P_0$ and $b<0$ for $P_1$, we conclude that the orbits $l_0(K)$, respectively $l_1(K)$ go out of $P_0$, respectively enter $P_1$ tangent to the eigenvector $e_1$ in \eqref{interm9}. Moreover, in a neighborhood of these saddle points we get from the formulas of the $V$-components of them that $C_s=(1-b^2)/b$, thus
\begin{equation}\label{interm10}
mC_s+b(m+1)=\frac{m+b^2}{b}
\end{equation}
We now look at the dependence on $K$ of the orbits locally, in sufficiently small neighborhoods of $P_0$, respectively $P_1$. Taking into account \eqref{interm10} we have
\begin{equation*}
\frac{dV}{dU}\sim\frac{mbC_K}{mC_s+b(m+1)}=\frac{C(m,N,p)b^2m}{m+b^2}\frac{1}{K^{(p-m)/(1-m)}},
\end{equation*}
where $C(m,N,p)>0$ is the constant appearing in the formula of $C_K$ in \eqref{coefficients}. We thus infer the desired local monotonicity with respect to $K$ in a local neighborhood of the points. Moreover, along the trajectories we have
$$
\frac{dV}{dU}=\frac{C_KUV-U^{(p-m)/(1-m)}-V^2-C_sV+1}{C_mUV}=\frac{C_K}{C_m}-\frac{U^{(p-m)/(1-m)}+V^2+C_sV-1}{C_mUV}
$$
and this varies in a decreasing way with respect to $K>0$. By the comparison theorem, we infer that the orbits $l_0(K)$, $l_1(K)$ remain ordered for different values of $K$ at least while the sign of $V$ does not change, as claimed.
\end{proof}
The statement of Lemma \ref{lem.monot} cannot be extended further without any restrictions. Indeed, Lemma \ref{lem.small} shows that the orbits $l_1(K)$ with $K\in(0,K_1)$ all meet at the critical point $P_2$, despite being ordered before arriving (in the backward sense of their directions) to $P_2$. The next lemma shows that this is the only possible case of intersection over the $U$ axis
\begin{lemma}[Strict monotonicity outside $P_2$]\label{lem.strict}
Two orbits $l_1(K_1)$ and $l_1(K_2)$ with $K_1<K_2$ cannot intersect at points $(U,0)$ with $U>1$. The same result is valid also for two orbits $l_0(K_1)$ and $l_0(K_2)$.
\end{lemma}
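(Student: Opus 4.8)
The plan is to argue by contradiction: assume $l_1(K_1)$ and $l_1(K_2)$ with $K_1<K_2$ share a point $(U_0,0)$ with $U_0>1$, and extract from this a local ordering of the two orbits that is incompatible with the Monotonicity Lemma \ref{lem.monot}. The first observation is that at such a point the flow of \eqref{PPSystRen} satisfies $\dot U=C_mU_0\cdot0=0$ and $\dot V=1-U_0^{(p-m)/(1-m)}<0$, so both orbits cross the $U$-axis \emph{downward} with a vertical tangent, and $U_0$ is a strict local maximum of $U$ along each orbit. In particular $dV/dU$ blows up at the crossing, so the comparison argument of Lemma \ref{lem.monot} (monotonicity of $dV/dU$ in $K$) degenerates exactly here; the crux is therefore to compare the two orbits \emph{to second order} at this tangency.

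To do this I switch to $U=U(V)$ near the crossing, which is legitimate because the denominator of
$$
\frac{dU}{dV}=\frac{C_mUV}{1-U^{(p-m)/(1-m)}-V^2-C_sV+C_KUV}
$$
is nonzero, equal to $1-U_0^{(p-m)/(1-m)}<0$, at $(U_0,0)$. The only $K$-dependence sits in the term $C_KUV$ of the denominator, and since $C_K$ is strictly decreasing in $K$ by \eqref{coefficients}, a direct differentiation gives $\partial_K(dU/dV)>0$ at every fixed $(U,V)$ with $V\neq0$. Integrating from the common value $U(0)=U_0$ and using the comparison theorem (forward for $V>0$, backward for $V<0$), a short computation — equivalently, a comparison of the cubic Taylor coefficients of $U(V)$, which differ precisely because of the $C_K$-term — shows that on each side of the axis the coordinate $V$ at fixed $U$ near $U_0$ is strictly increasing in $K$. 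Hence $l_1(K_2)$ lies \emph{strictly above} $l_1(K_1)$ on both branches in a neighbourhood of $(U_0,0)$, and in particular just below the axis, in $\{V<0\}$.

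This contradicts Lemma \ref{lem.monot}. Indeed, along the final $V<0$ stretch leading into $P_1$ both orbits are graphs $V=V_i(U)$, and near $P_1$ that lemma gives the opposite order $V_1(U)>V_2(U)$. Thus the two graphs must cross at some $U_*\in(0,U_0)$; but at any such crossing the slope comparison ($dV/dU$ strictly decreasing in $K$) forces $dV_1/dU(U_*)>dV_2/dU(U_*)$, so $V_1$ overtakes $V_2$ from below as $U$ increases, whence $V_1(U)>V_2(U)$ for all $U>U_*$. This is incompatible with $V_2>V_1$ just to the left of $U_0$, a contradiction. The statement for $l_0(K_1),l_0(K_2)$ is entirely symmetric: the relevant first return to the axis of the $V>0$ stretch emanating from $P_0$ is again downward, hence at $U>1$; the local computation now gives $l_0(K_2)$ above $l_0(K_1)$ in $\{V>0\}$ near $(U_0,0)$, while Lemma \ref{lem.monot} gives $l_0(K_1)$ above $l_0(K_2)$ before the first crossing, and the same overtaking argument closes the contradiction.

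The main obstacle is precisely the \emph{tangential} nature of the intersection: because both orbits meet the axis vertically at $U_0>1$, first-order information is useless and one must descend to the second-order comparison of $U(V)$, where the $K$-dependence finally enters through $C_K$. A secondary technical point is that $P_2=(1,0)$ may be a focus (Lemma \ref{lem.P2}), so an orbit could a priori meet the axis several times while spiralling; this is handled by restricting the comparison to the single monotone $V<0$ stretch adjacent to $P_1$ (respectively the $V>0$ stretch adjacent to $P_0$), on which the orbits are genuine graphs over $U$ and both the local expansion and Lemma \ref{lem.monot} apply. Note finally that the argument breaks down exactly at $U_0=1$, i.e. at $P_2$, in accordance with Lemma \ref{lem.small}, where the orbits $l_1(K)$ for small $K$ do meet at $P_2$.
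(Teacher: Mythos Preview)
Your proof is correct and ultimately reaches the same contradiction as the paper's, but the technical route is somewhat different. The paper stays in the $V(U)$ picture throughout: it writes out an explicit identity
\[
\frac{d(V_1-V_2)}{dU}=\frac{1}{C_m}\left[(C_{K_1}-C_{K_2})+\frac{1-U^{(p-m)/(1-m)}}{U}\Big(\frac{1}{V_1}-\frac{1}{V_2}\Big)\right]-\frac{V_1-V_2}{C_mU},
\]
observes that for $U>1$ and $V_2<V_1<0$ the middle bracket is nonnegative, and concludes that $d(V_1-V_2)/dU$ is bounded below by the strictly positive constant $(C_{K_1}-C_{K_2})/C_m$ as $V_1,V_2\to0$. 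This immediately contradicts the fact that the positive quantity $V_1-V_2$ must tend to zero as $U\uparrow U_0$. Your route---switching to $U(V)$, locating the $C_K$-dependence in the cubic Taylor coefficient of $U(V)$, deducing the reversed local ordering $V_2>V_1$ just below the axis, and then running a separate overtaking argument against Lemma~\ref{lem.monot}---is longer but perfectly valid, and it makes the geometric reason (the crossing is vertical, so first-order information is lost and one must go to the next order) more explicit. The paper's identity simply packages both your local expansion and your crossing argument into a single differential inequality, which is why it is shorter.
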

\begin{proof}
Since $K_1<K_2$ it follows obviously from \eqref{coefficients} that $C_{K_1}>C_{K_2}$. Fix now $U>1$ and estimate the distance between the $V$-components of the two orbits (already ordered by Lemma \ref{lem.monot} before reaching $V=0$):
\begin{equation*}
\begin{split}
\frac{d(V_1-V_2)}{dU}&=\frac{C_{K_1}UV_1-U^{(p-m)/(1-m)}-V_1^2-C_sV_1+1}{C_mUV_1}\\&-\frac{C_{K_2}UV_2-U^{(p-m)/(1-m)}-V_2^2-C_sV_2+1}{C_mUV_2}\\
&=\frac{1}{C_m}\left[(C_{K_1}-C_{K_2})+\frac{1-U^{(p-m)/(1-m)}}{U}\left(\frac{1}{V_1}-\frac{1}{V_2}\right)\right]\\
&-\frac{1}{C_m}\frac{V_1-V_2}{U}.
\end{split}
\end{equation*}
Assume now for contradiction that two orbits, either $l_1(K_1)$ and $l_1(K_2)$, or $l_0(K_1)$ and $l_0(K_2)$, intersect at some point $(U,0)$ with $U>1$. Take then a small neighborhood of this intersection point $(U,0)$ still included in the half-plane $\{U>1\}$. Before the intersection, we know from Lemma \ref{lem.monot} that the orbits are still ordered and $V_1>V_2$ for the same value of $U$. Since $U>1$ in the neighborhood we have chosen, it follows that
$$
\frac{1-U^{(p-m)/(1-m)}}{U}\left(\frac{1}{V_1}-\frac{1}{V_2}\right)>0
$$
before the intersection of the two orbits. Moreover, since $K_1<K_2$ we have $C_{K_1}-C_{K_2}>0$. Letting $V_1$, $V_2$ tend to 0, we get from the previous equality that in the limit
$$
\frac{d(V_1-V_2)}{dU}\to\frac{C_{K_1}-C_{K_2}}{C_m}>0
$$
and a contradiction, as the latter says that the distance between the orbits increases instead of tending to zero (as it should happen at an intersection point). This argument is valid for both families of orbits $l_0$ and $l_1$.
\end{proof}
Noticing further that the direction of the flow of the system \eqref{PPSystRen} over the $U$ axis is given by the sign of $1-U^{(p-m)/(1-m)}$, we readily infer that no orbit can cross the $U$ axis from $\{V>0\}$ to $\{V<0\}$ at some point $(U,0)$ with $U<1$. This remark together with Lemmas \ref{lem.monot} and \ref{lem.strict} allow us to introduce the following functions of $K$. Let $U_0(K)$ be the coordinate of the first intersection point of the orbit $l_0(K)$ with the $U$ axis (after going out of $P_0$) and $U_1(K)$ be the last intersection point of the orbit $l_1(K)$ (before entering $P_1$), with the convention that $U_0(K)=+\infty$ if $l_0(K)$ enters one of the critical points $Q_1$ or $Q_4$ at infinity without crossing the $U$ axis. We have just proved that, if $K_1>0$ is the highest parameter for which the orbit $l_1(K)$ comes directly from $P_2$, then $U_1(K)$ is a strictly increasing function for $K>K_1$, and $U_0(K)$ is a strictly decreasing function when $1<U_0(K)<\infty$. Moreover, both functions are continuous with respect to $K$ (when taking finite values), as it follows from the continuity with respect to the parameter.

\subsection{Orbits for $K>0$ large and final argument when $m\in(0,m_s)$}\label{subsec.large}

The next step is to study the other extremal configuration of the phase plane, for $K>0$ very large. Let us restrict ourselves for this study to the range $m\in(0,m_s)$, which makes an important difference with respect to other ranges of $m$, as it follows from Lemma \ref{lem.P2}. Indeed, the critical point $P_2$ is the one that drives the whole picture of the phase plane, and the fact that it changes as indicated in Lemma \ref{lem.P2} for $m\in(0,m_s)$ will become a decisive fact in this section. Knowing that $K\to\infty$ implies $C_K\to0$, we begin from the analysis of the limit system obtained by just letting $C_K=0$
\begin{equation}\label{PPSystLim}
\left\{\begin{array}{ll}\dot{U}=C_mUV,\\ \dot{V}=-V^2-C_sV+1-U^{(p-m)/(1-m)},\end{array}\right.
\end{equation}
\begin{lemma}\label{lem.cycles}
The dynamical system \eqref{PPSystLim} does not have limit cycles.
\end{lemma}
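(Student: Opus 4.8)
The plan is to rule out periodic orbits by means of the Bendixson--Dulac criterion. First I would observe that the line $\{U=0\}$ is invariant for \eqref{PPSystLim}, since $\dot U=C_mUV$ vanishes identically there; consequently no trajectory issuing from $\{U>0\}$ can ever leave it, and any limit cycle of the physically relevant flow must lie entirely in the open half-plane $\{U>0\}$, which is simply connected. It therefore suffices to exhibit a Dulac function $B(U,V)$, of class $C^1$ on $\{U>0\}$, for which the divergence of the rescaled field $(BP,BQ)$ keeps a strict sign, where $P(U,V)=C_mUV$ and $Q(U,V)=-V^2-C_sV+1-U^{(p-m)/(1-m)}$ denote the two components of \eqref{PPSystLim}.

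The naive choice $B\equiv1$ does not work: the bare divergence $\partial_UP+\partial_VQ=(C_m-2)V-C_s$ is affine in $V$ and hence changes sign. To correct this I would look for a Dulac multiplier of monomial type $B(U,V)=U^a$, with the exponent $a$ to be determined. Since $B$ is independent of $V$, a direct computation gives
\begin{equation*}
\partial_U(BP)+\partial_V(BQ)=U^a\bigl[(C_m(a+1)-2)\,V-C_s\bigr],
\end{equation*}
where the nonlinear term $-U^{(p-m)/(1-m)}$ contributes nothing, as it does not depend on $V$. The coefficient of $V$ can be annihilated by choosing $a$ so that $C_m(a+1)=2$; recalling $C_m=(1-m)/m$ this forces
\begin{equation*}
a=\frac{2m}{1-m}-1=\frac{3m-1}{1-m}.
\end{equation*}
For this value of $a$ the divergence collapses to $-C_sU^a$.

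It then remains only to check the sign. Since $B=U^a$ is smooth and strictly positive on $\{U>0\}$ for every real exponent $a$, and since we are in the range $m\in(0,m_s)$, the coefficient $C_s=\frac{(N+2)(m_s-m)}{\sqrt{2mN(m_c-m)}}$ is strictly positive; hence $-C_sU^a<0$ throughout $\{U>0\}$. The divergence of $(BP,BQ)$ thus has constant negative sign on the simply connected region $\{U>0\}$, and the Bendixson--Dulac criterion excludes any closed orbit there, which completes the proof. The only genuinely delicate point is the choice of the Dulac multiplier: the monomial ansatz $U^a$ succeeds precisely because the nonlinearity is $V$-independent and because the $m$-range under consideration guarantees $C_s>0$; this positivity is exactly the feature that is lost at $m=m_s$ and beyond, in agreement with the pivotal role played by $m_s$ throughout the paper.
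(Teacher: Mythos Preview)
Your proof is correct and follows essentially the same approach as the paper: both apply the Bendixson--Dulac criterion with the multiplier $U^a$, choose $a$ so that the $V$-term cancels (your $a=(3m-1)/(1-m)$ coincides with the paper's $a=(2-C_m)/C_m$), and conclude from $C_s>0$ on the range $m\in(0,m_s)$. Your version is slightly more detailed in justifying the restriction to $\{U>0\}$ and in motivating the ansatz, but the argument is the same.
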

\begin{proof}
We use Dulac's Criteria \cite[Theorem 2, Section 3.9]{Pe} taking a generic function $U^{a}$, with $a$ to be determined later, as "integrating factor". We compute the divergence of the vector field obtained by multiplying the vector field of the system \eqref{PPSystLim} by $U^a$ to get
\begin{equation*}
\begin{split}
\frac{\partial}{\partial U}(C_mU^{a+1}V)&+\frac{\partial}{\partial V}\left(-U^aV^2-C_sU^aV+U^a-U^{a+(p-m)/(1-m)}\right)\\
&=(a+1)C_mU^aV-2U^aV-C_sU^a=-C_sU^a<0,
\end{split}
\end{equation*}
by choosing $a$ such that $a=(2-C_m)/C_m$ and taking into account that $C_s>0$ for $m\in(0,m_s)$. Since the divergence has always the same sign, Dulac's Criteria concludes the proof.
\end{proof}
We easily notice that the critical points $P_0$, $P_1$ and $P_2$ and the critical points at infinity remain the same and have similar local analysis in the system \eqref{PPSystLim} as the analysis we did in Section \ref{sec.local}. In particular, we can consider $l_1(\infty)$ to be the unique orbit entering the saddle point $P_1$ in the limit system \eqref{PPSystLim}. We infer from Lemma \ref{lem.cycles}, the local analysis of the points and the Poincar\'e-Bendixon's theory \cite[Section 3.7]{Pe} that the orbit $l_1(\infty)$ must come from a critical point among $Q_2$ or $P_0$. We next show that the latter is impossible.
\begin{lemma}\label{lem.nogood.limit}
There cannot be a trajectory connecting $P_0$ and $P_1$ in the system \eqref{PPSystLim}.
\end{lemma}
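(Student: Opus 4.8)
The plan is to rule out a $P_0$--$P_1$ connection in the limit system \eqref{PPSystLim} by exploiting the \emph{absence of the mixed term} $C_KUV$ in $\dot V$, which makes the system considerably more rigid than \eqref{PPSystRen}. My first move would be to understand the geometry of the nullclines. The $U$-nullcline of \eqref{PPSystLim} consists of the invariant line $U=0$ together with $V=0$; the $V$-nullcline is the curve $V^2+C_sV-1+U^{(p-m)/(1-m)}=0$, which I would solve explicitly for $V$ as a function of $U$, namely $V_\pm(U)=\tfrac12\bigl(-C_s\pm\sqrt{C_s^2+4-4U^{(p-m)/(1-m)}}\bigr)$. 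Recall $C_s>0$ here since $m\in(0,m_s)$. The branch $V_+$ passes through $P_0=(0,V_+(0))$ and the branch $V_-$ through $P_1=(0,V_-(0))$, and both branches meet the $U$-axis at the single point $U=1$ (which is $P_2$). This already suggests that $P_2$ acts as a \emph{funnel} separating the two saddle sectors.

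The key step is a monotonicity/invariant-region argument showing that $l_1(\infty)$, the unique orbit entering $P_1$, cannot reach $P_0$. I would track the orbit $l_1(\infty)$ backward from $P_1$: near $P_1$ it lies in $\{V<0\}$ and, by the sign analysis already used after Lemma \ref{lem.strict}, it cannot cross the $U$-axis from $\{V<0\}$ to $\{V>0\}$ at any point with $U<1$, since there $\dot V=1-U^{(p-m)/(1-m)}>0$ but the flow direction forbids the wrong crossing. Hence, traced backward, $l_1(\infty)$ either stays in $\{V<0\}$ forever (and must then limit onto a critical point in the lower half-plane or at infinity, i.e.\ $Q_3$ or $P_2$) or crosses the $U$-axis only at a point with $U\ge 1$. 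Because Lemma \ref{lem.cycles} forbids limit cycles and the Poincar\'e--Bendixson theory restricts $\omega$- and $\alpha$-limit sets to the known critical points, the only candidate source compatible with a crossing into $\{V>0\}$ toward $P_0$ would force the orbit through the neighborhood of $P_2=(1,0)$. I would then show that any orbit entering $\{V>0\}$ across the $U$-axis at $U\ge 1$ is trapped and driven toward $Q_2$ (recall $\dot U=C_mUV>0$ when $V>0$, so $U$ is strictly increasing there), and therefore cannot return to the $U=0$ axis where $P_0$ lives.

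The main obstacle I anticipate is controlling precisely what happens in a neighborhood of $P_2=(1,0)$, since in the range $m\in(0,m_s)$ this point may be a center or focus (Lemma \ref{lem.P2}), and a spiralling orbit could a priori cross the $U$-axis repeatedly. To handle this I would construct an explicit invariant region for \eqref{PPSystLim} whose boundary consists of the $V$-axis, an arc of the $V$-nullcline, and the $U$-axis, and check the flow direction on each piece using the signs of $\dot U$ and $\dot V$; Dulac's argument from Lemma \ref{lem.cycles} (the divergence equals $-C_sU^a<0$) guarantees that no orbit can wind around $P_2$ indefinitely, so the spiralling, if any, must terminate at $P_2$ itself rather than produce extra axis-crossings that could escape toward $P_0$. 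Combining the invariant-region trapping with the no-cycle conclusion should pin $\alpha(l_1(\infty))$ to either $Q_2$ or $P_2$, excluding $P_0$ and thereby establishing the lemma.
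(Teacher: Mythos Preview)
Your direct approach has a genuine gap in the monotonicity step. You write that once the backward orbit $l_1(\infty)$ enters $\{V>0\}$ at some $U\ge 1$ it is ``trapped and driven toward $Q_2$'' because $\dot U=C_mUV>0$ there. But that sign is for \emph{forward} time; you are tracing the orbit \emph{backward}, and backward in $\{V>0\}$ the coordinate $U$ \emph{decreases}. So nothing you wrote prevents the backward orbit from drifting down to $U=0$ and landing on $P_0$, which is precisely the scenario to be excluded. In particular, the simplest possible $P_0$--$P_1$ connection --- $l_0(\infty)$ crossing the $U$-axis once at some $U_0(\infty)>1$ and then monotonically decreasing in $U$ down to $P_1$ --- is not touched by your argument. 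Your later remarks about invariant regions and spiralling address multiple crossings near $P_2$ but not this single-crossing case. (Also, in the limit system $P_2$ is \emph{stable}, since the trace of the linearisation there equals $-C_s<0$; backward orbits are therefore repelled from $P_2$ and cannot terminate there, so your candidate ``$\alpha(l_1(\infty))=P_2$'' is not available either.)

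The paper's proof is entirely different and indirect: it assumes a $P_0$--$P_1$ connection in \eqref{PPSystLim} and derives a contradiction \emph{in the full system} \eqref{PPSystRen} for large $K$. If $l_0(\infty)=l_1(\infty)$ then $U_0(\infty)<\infty$, and by continuity in $C_K$ together with the monotonicity Lemmas \ref{lem.monot}--\ref{lem.strict} one gets $1<U_1(K)<U_0(K)<\infty$ for $K$ large. Since $P_2$ is stable for such $K$, the orbit $l_1(K)$ is trapped in the region bounded by $l_0(K)$ and the $V$-axis, cannot come from $P_2$, and by Poincar\'e--Bendixson would have to emanate from a limit cycle there. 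But Dulac's criterion with factor $U^a$, $a=(2-C_m)/C_m$, applied to \eqref{PPSystRen} gives divergence $C_KU^a(U-C_s/C_K)$, which is strictly negative on the whole strip $\{0<U<2U_0(\infty)\}$ once $K$ is large enough (since $C_s/C_K\to\infty$), ruling out any such cycle. The contradiction lives in \eqref{PPSystRen}, not in \eqref{PPSystLim}; the limit system itself is only used as a target for continuity in the parameter.
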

\begin{proof}
Assume for contradiction that there exists such a connection, which means that $l_0(\infty)=l_1(\infty)$, where $l_0(\infty)$ denotes the orbit going out of $P_0$ in the system \eqref{PPSystLim}. Let $U_0(\infty)$ be the $U$-coordinate of the first point at which $l_0(\infty)$ crosses the $U$ axis. Since it is obvious that $U_0(\infty)<\infty$ (as the orbit $l_0(\infty)$ goes to $P_1$), we deduce by monotonicity and continuity with respect to the parameter $C_K$ in the system \eqref{PPSystRen} that for $K$ very large $1<U_1(K)<U_0(K)<\infty$. Here it is essential that \emph{for $K$ very large, the point $P_2$ is stable}. It thus follows, for such $K$ sufficiently large, that the orbit $l_1(K)$ must go out of a limit cycle which lies in the region limited by the orbit $l_0(K)$ and the $V$ axis, since it cannot go out of $P_2$ and the orbit $l_0(K)$ becomes a barrier for $l_1(K)$ that cannot be crossed. We next prove that this scenario is impossible using again Dulac's Criteria, with exactly the same multiplying function $U^a$, $a=(2-C_m)/C_m$ as in the proof of Lemma \ref{lem.cycles}. In this case, the divergence of the vector field obtained from the one of the system \eqref{PPSystRen} multiplied by $U^a$ is obtained from the previous one by adding the influence of the term with $C_K$, namely
\begin{equation}\label{interm11}
-C_sU^a+C_kU^{a+1}=C_KU^a\left(U-\frac{C_s}{C_K}\right),
\end{equation}
which for $K>0$ sufficiently large is negative in the whole strip $\{0<U<2U_0(\infty)\}$ (recalling that $C_s>0$ for any $m\in(0,m_s)$). This implies that there are no limit cycles included in the strip $\{0<U<2U_0(\infty)\}$ and it is obvious that a bigger limit cycle must cross $l_0(K)$, which is a contradiction. We conclude that the connection $P_0$-$P_1$ is impossible in the limit system \eqref{PPSystLim}.
\end{proof}
We thus conclude as an outcome of Lemma \ref{lem.cycles}, Lemma \ref{lem.nogood.limit} and the local analysis near $P_2$ done in Lemma \ref{lem.P2} that the orbit $l_1(\infty)$ in the limit system \eqref{PPSystLim} goes out of $Q_2$, while the orbit $l_0(\infty)$ must stay inside the region limited by the $V$ axis and the orbit $l_1(\infty)$ and thus enter the (stable point) $P_2$. We are now in a position to prove that the same holds true for the system \eqref{PPSystRen} with $K$ very large.
\begin{lemma}\label{lem.large}
There exists $K_0>0$ sufficiently large such that for any $K\in(K_0,\infty)$, the orbit $l_0(K)$ enters the critical point $P_2$ and $U_0(K)<U_1(K)$ for $K\in(K_0,\infty)$.
\end{lemma}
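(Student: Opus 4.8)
The plan is to treat the large-$K$ regime as a regular perturbation of the limit system \eqref{PPSystLim}, whose global portrait was pinned down in Lemmas \ref{lem.cycles} and \ref{lem.nogood.limit}: there $l_1(\infty)$ is shown to emanate from $Q_2$, while the orbit $l_0(\infty)$ leaving $P_0$ is trapped in the region bounded by the $V$ axis and $l_1(\infty)$ and must enter the stable point $P_2$. Reading off \eqref{coefficients} that $C_K\to0$ as $K\to\infty$, the vector field of \eqref{PPSystRen} converges in $C^1$ to that of \eqref{PPSystLim}, uniformly on compact subsets of $\{U\geq0\}$; this is the only mechanism I will use.

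First I would record that $P_2$ stays \emph{uniformly hyperbolic and attracting} for large $K$. Linearizing \eqref{PPSystRen} at $P_2=(1,0)$ gives trace $C_K-C_s$ and determinant $(p-m)/m>0$; since $C_s>0$ for $m\in(0,m_s)$ and $C_K\to0$, the trace is negative and bounded away from $0$ once $K$ is large. Hence there are a fixed neighborhood $B$ of $P_2$ and a threshold $K_0'$ so that $B$ lies in the basin of attraction of $P_2$, simultaneously for \eqref{PPSystLim} and for every $K>K_0'$. I would then transport the connection by continuous dependence on the parameter: $l_0(K)$ is the unstable manifold of the hyperbolic saddle $P_0$, whose eigendata vary continuously with $K$, so anchoring a base point on $l_0(K)$ near $P_0$ along the (continuously varying) unstable eigenvector and flowing forward produces trajectory segments that converge, uniformly on compact time intervals, to the matching segment of $l_0(\infty)$. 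As $l_0(\infty)$ enters the interior of $B$ in finite time, for all $K$ larger than some $K_0\geq K_0'$ the orbit $l_0(K)$ also enters $B$ and is therefore captured by $P_2$. The same continuity, applied to $l_1(K)$ and to the unstable node $Q_2$ on the Poincar\'e sphere, shows that $l_1(K)$ still comes from $Q_2$ for $K$ large, so that $l_1(K)$ together with the $V$ axis bounds a region $R(K)$ which $l_0(K)$ cannot leave, the $V$ axis being invariant and $l_1(K)$ being uncrossable by uniqueness.

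For the inequality I would read it straight off the trapping. On the $U$ axis the region $R(K)$ is exactly the segment $0\leq U\leq U_1(K)$, so every intersection of $l_0(K)$ with the positive $U$ axis, and in particular its first one $U_0(K)$, satisfies $U_0(K)<U_1(K)$; strictness holds because $l_0(K)$ and $l_1(K)$ are distinct orbits and hence meet the axis at distinct points. This gives both assertions for all $K>K_0$. I stress that this trapping argument seems genuinely needed: for large $K$ the point $P_2$ may well be a stable \emph{focus}, in which case $l_0(K)$ spirals in and its first crossing $U_0(K)$ need not tend to the abscissa $1$ of $P_2$, so the inequality cannot be deduced from $U_1(K)>1$ alone.

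The delicate point is the second paragraph. Continuous dependence holds only on compact time intervals, so by itself it would merely say that $l_0(K)$ shadows $l_0(\infty)$ for a while; what upgrades this to genuine convergence to $P_2$ is the \emph{uniform}, parameter-robust basin $B$ furnished by the hyperbolicity of $P_2$, and this is exactly the step where the sign $C_s>0$ (that is, the restriction $m\in(0,m_s)$) is indispensable. Once $l_0(K)$ is captured by $P_2$ and confined to $R(K)$, the possibility that it escapes to the infinity points $Q_1$ or $Q_4$ is ruled out automatically, so no separate boundedness argument is needed.
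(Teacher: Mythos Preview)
Your proof is correct and its overall architecture---continuity in $C_K$ to show that $l_1(K)$ emanates from $Q_2$ for large $K$, followed by the trapping of $l_0(K)$ in the region bounded by $l_1(K)$ and the invariant $V$ axis to deduce $U_0(K)<U_1(K)$---coincides with the paper's. The one genuine difference lies in how you conclude that $l_0(K)$ terminates at $P_2$. You build a \emph{uniform} hyperbolic basin $B$ around $P_2$ (legitimate because the trace $C_K-C_s\to -C_s<0$ while the determinant $(p-m)/m>0$ is fixed, so the spectrum stays bounded away from the imaginary axis), and then shadow $l_0(\infty)$ into $B$ via continuous dependence on compact time intervals; once inside $B$, capture by $P_2$ is automatic. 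The paper instead first traps $l_0(K)$ in the region bounded by $l_1(K)$ and the $V$ axis, and then appeals to Poincar\'e--Bendixson together with the Dulac computation \eqref{interm11} from the proof of Lemma~\ref{lem.nogood.limit} (no limit cycles in the strip $\{0<U<2U_0(\infty)\}$) to force convergence to the only available attractor $P_2$. Your route sidesteps the second use of Dulac and is a clean perturbation argument; the paper's route avoids the local stable-manifold machinery and stays entirely within the global phase-plane tools already set up. Both are sound, and your remark that the focus case prevents reading $U_0(K)<U_1(K)$ off from $U_1(K)>1$ alone is a nice observation explaining why the trapping is not redundant.
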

\begin{proof}
From the previous discussion and the continuity with respect to the parameter $C_K$ in the system \eqref{PPSystRen} near $C_K=0$, we get that for $K$ sufficiently large the orbit $l_1(K)$ also goes out of $Q_2$, since $Q_2$ is an unstable node. The orbit $l_0(K)$ cannot cross the orbit $l_1(K)$ and thus must remain forever in the region limited by the $V$ axis and the orbit $l_1(K)$, which immediately implies $U_0(K)<U_1(K)$. Moreover, by the same consideration of non-existence of limit cycles in big strips obtained in the proof of Lemma \ref{lem.nogood.limit}, the fact that for $K$ large $P_2$ is a stable node or focus and the Poincar\'e-Bendixon's theory we further deduce that $l_0(K)$ enters $P_2$, as stated.
\end{proof}
At this point, before ending the proof of Theorem \ref{th.1}, let us plot in Figure \ref{fig3} the extremal configurations of the phase plane associated to the system \eqref{PPSystRen}, that is, first for $K>0$ sufficiently small and then for $K\in(K_0,\infty)$ as proved in Lemma \ref{lem.large}.

\begin{figure}[ht!]
  % Requires \usepackage{graphicx}
  \begin{center}
  \subfigure[$K>0$ small]{\includegraphics[width=7.5cm,height=6cm]{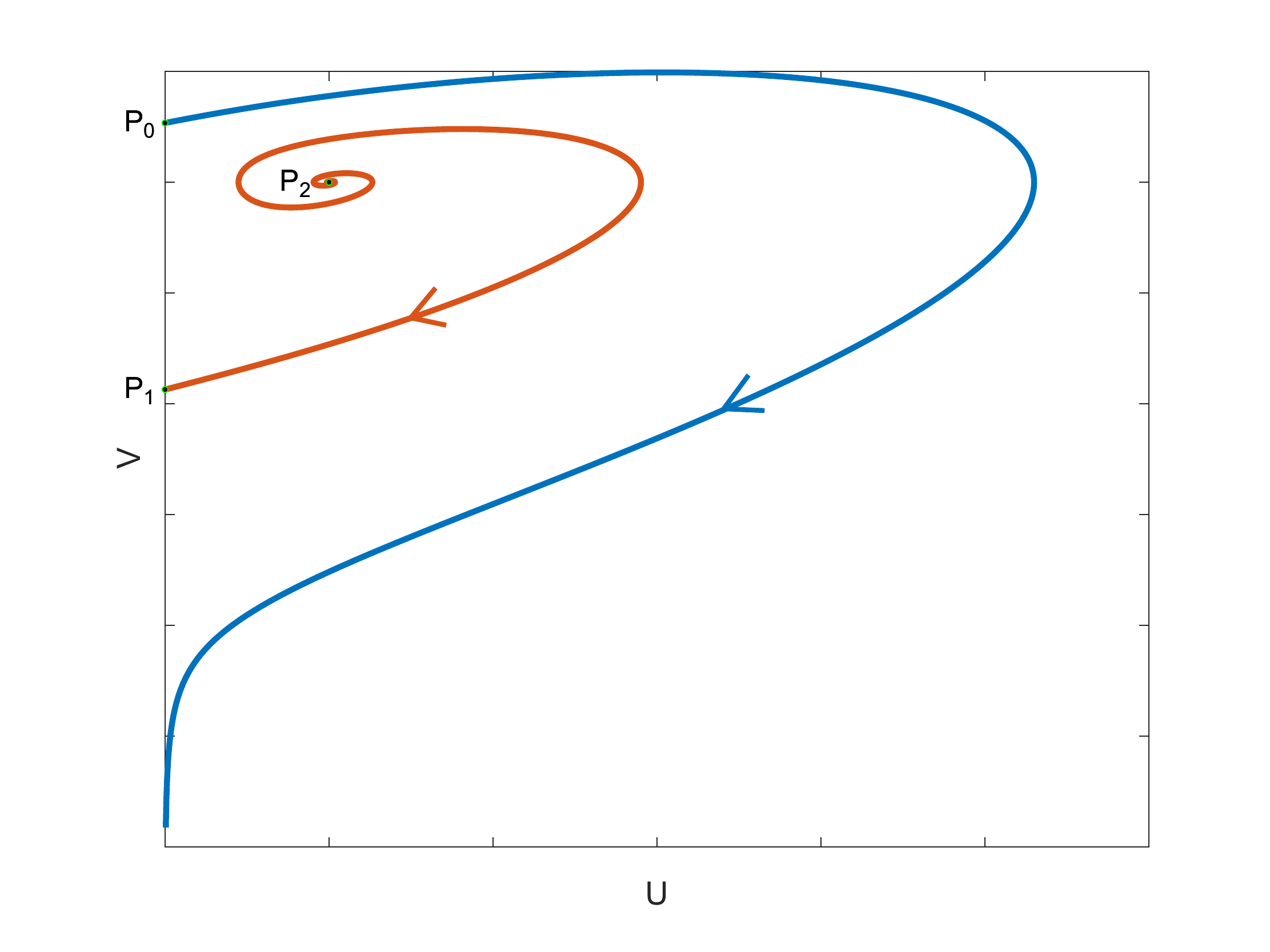}}
  \subfigure[$K>0$ large]{\includegraphics[width=7.5cm,height=6cm]{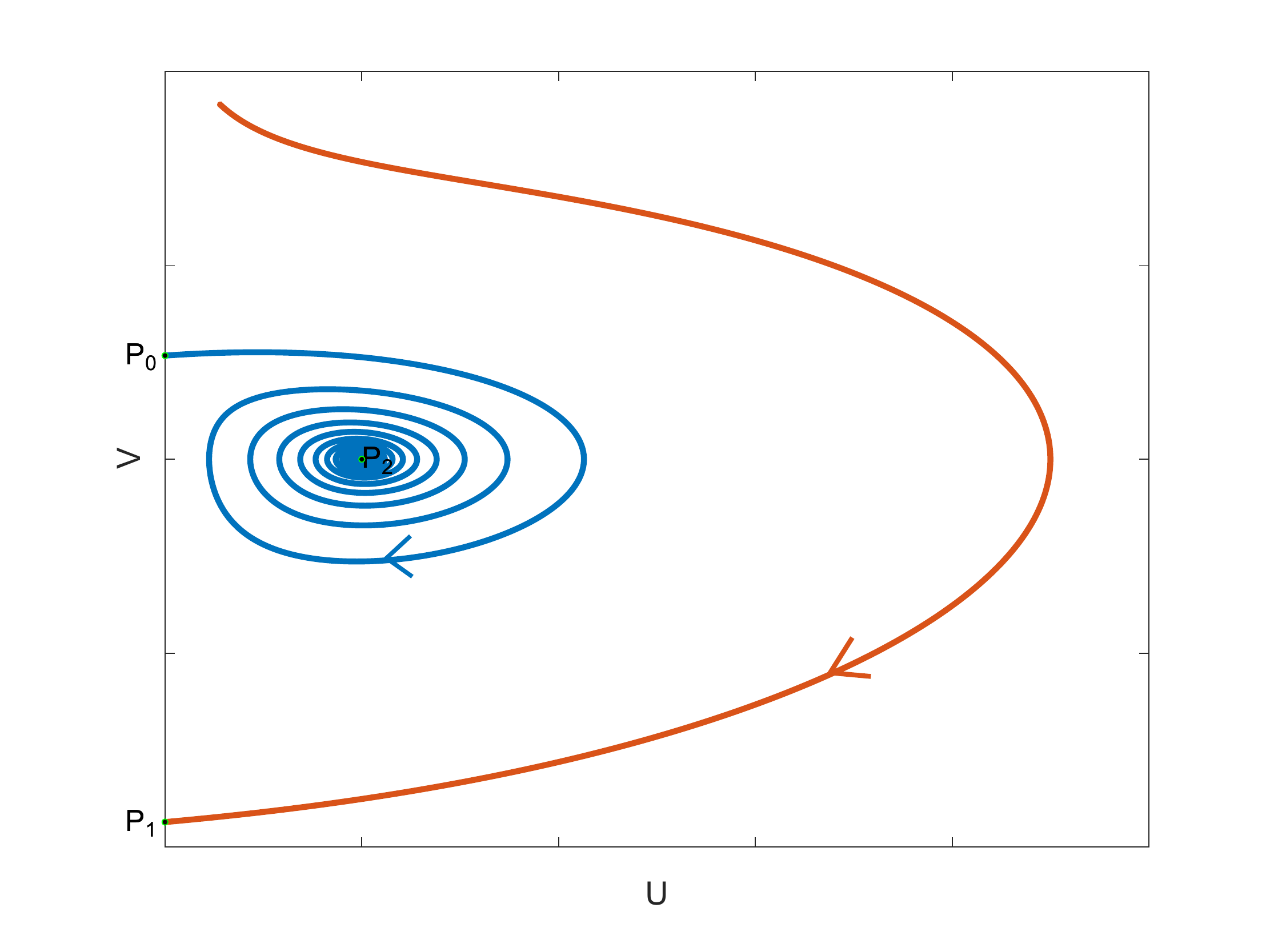}}
  \end{center}
  \caption{Trajectories in the phase space for $K>0$ small and $K>0$ large. Numerical experiment for $N=3$, $m=0.1$, $p=2$ and $K=0.5$, respectively $K=10$}\label{fig3}
\end{figure}

We are now in a position to complete the proof of Theorem \ref{th.1} for any $m\in(0,m_s)$ and any $p>1$.
\begin{proof}[Proof of Theorem \ref{th.1} for $0<m<m_s$]
With the previous notation, introduce the following function
$$
g(K):=U_0(K)-U_1(K),
$$
with the convention that we allow $g(K)=\infty$ if $U_0(K)=\infty$. On the one hand, we have just proved in Lemma \ref{lem.large} that $g(K)<0$ for any $K\in(K_0,\infty)$. On the other hand, Lemma \ref{lem.small} gives that $U_1(K)=1$ for any $K\in(0,K_1)$ and thus, by considerations of flow, $U_0(K)>1$ for $K\in(0,K_1)$ and $g(K)>0$ in this interval. It is obvious that $g$ is a continuous function at least while $g(K)<\infty$, by a standard continuity argument with respect to the parameter, and Lemmas \ref{lem.monot} and \ref{lem.strict} give that $g$ is a strictly decreasing function once $g(K)<\infty$. It thus follows by Bolzano's theorem that there exists a unique value of $K>0$ for which $g(K)=0$, meaning that $U_0(K)=U_1(K)$ and, since this is fulfilled at some point $U_0(K)>1$, thus not a critical point, we infer that the two orbits should coincide, realizing a unique connection between $P_0$ and $P_1$, as claimed.
\end{proof}
We show in Figure \ref{fig4} the outcome of numerical experiments confirming the reversed monotonicity of the values $U_0(K)$ and $U_1(K)$ proved in Lemmas \ref{lem.monot} and \ref{lem.strict} for different shooting parameters $K$ and the formation of the critical orbit connecting $P_0$ and $P_1$.

\begin{figure}[ht!]
  % Requires \usepackage{graphicx}
  \begin{center}
  \includegraphics[width=11cm,height=7.5cm]{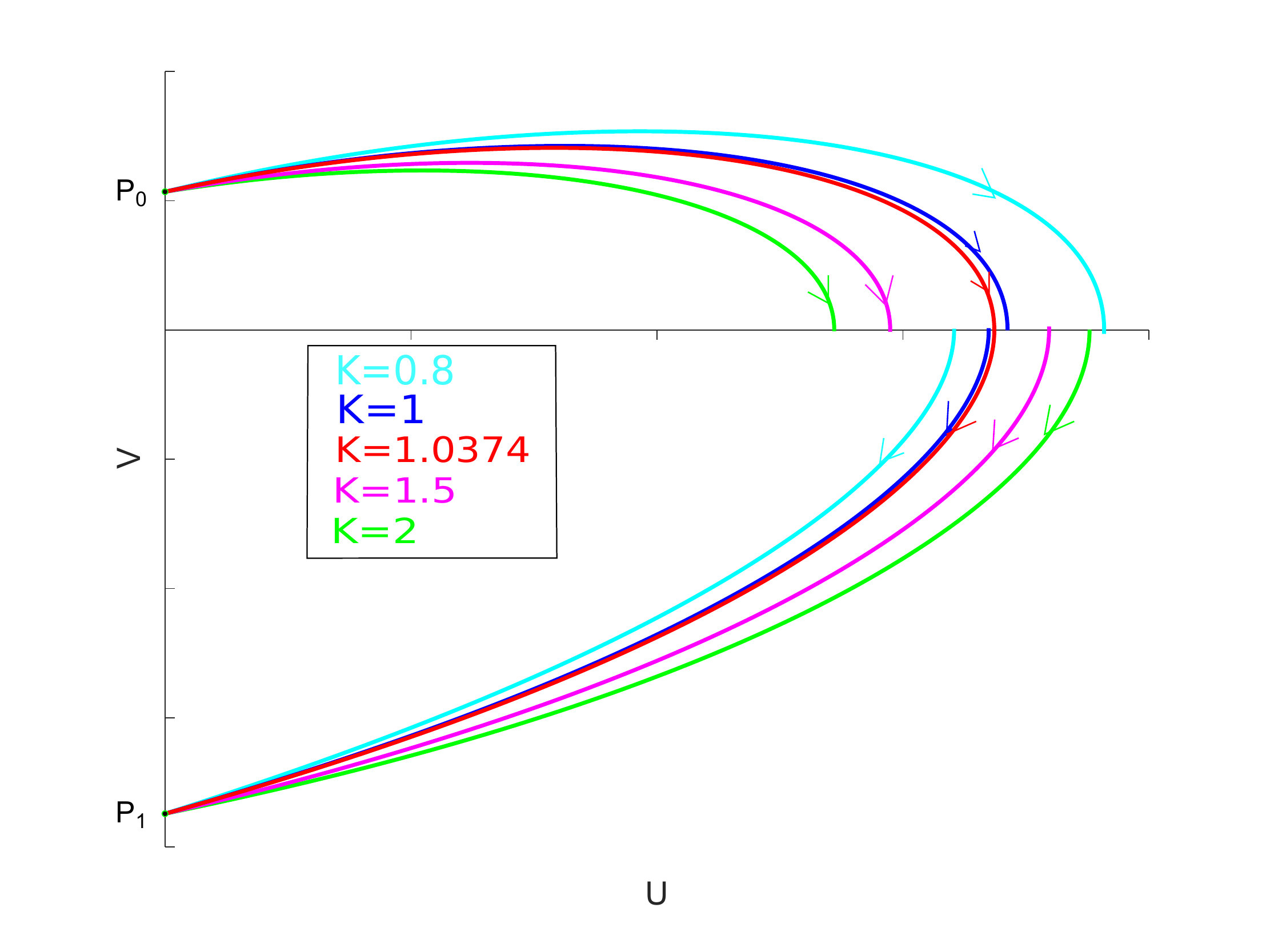}
  \end{center}
  \caption{Several orbits from $P_0$ and entering $P_1$ for different values of $K$. Experiment for $N=3$, $m=0.1$ and $p=2$}\label{fig4}
\end{figure}

\noindent \textbf{Remark.} Apart from the anomalous eternal solution obtained as the unique connection $P_0$-$P_1$, we infer from Lemma \ref{lem.large} that there exist infinitely many orbits connecting $P_0$ to the critical point $P_2$ (for $K\in(K_0,\infty)$ at least). These orbits contain profiles such that $f(0)=A>0$, $f'(0)=0$ and
$$
f(\xi)\sim C\xi^{-2/(1-m)}, \qquad {\rm as} \ \xi\to\infty.
$$
These profiles give rise to solutions that are not integrable as $|x|\to\infty$, but they are "eternal" analogous for Eq. \eqref{eq1} to the \emph{pseudo-Barenblatt profiles} for the subcritical fast diffusion \eqref{FDE} whose relevance for the large time behavior of the subcritical fast diffusion equation has been emphasized in the well-known paper \cite{BBDGV}.

\subsection{Proof of Theorem \ref{th.1} for $m\in(m_s,m_c)$}\label{subsec.negative}

The plan of this section is to prove that, on the one hand, there are no connections $P_0$-$P_1$ and corresponding profiles with $\alpha>0$ and, on the other hand, by changing the signs of the self-similar exponents we obtain a unique good anomalous solution with $\alpha<0$, as stated in Theorem \ref{th.1}. We begin with the first goal.
\begin{lemma}\label{lem.nonposit}
Fix $N\geq3$, $m\in(m_s,m_c)$ and $p>1$. Then there is no connection between the saddle points $P_0$ and $P_1$ in the system \eqref{PPSystRen}.
\end{lemma}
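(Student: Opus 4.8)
The plan is to exploit the crucial sign change of the coefficient $C_s$ at $m=m_s$: in the range $m\in(m_s,m_c)$ we have $C_s<0$, which reverses the qualitative picture established in Subsection \ref{subsec.large}. First I would recall from Lemma \ref{lem.P2} that for $m\in[m_s,m_c)$ the critical point $P_2$ is \emph{unstable} (either an unstable node or an unstable focus) for \emph{every} value of $K\in(0,\infty)$, in sharp contrast with the range $m\in(0,m_s)$ where $P_2$ could become stable for large $K$. This means the mechanism that produced the connection in the previous subsection — namely that $l_0(K)$ could enter a stable $P_2$ while $l_1(K)$ emerged from $Q_2$, forcing a crossing as $K$ varied — is structurally unavailable here, since $l_1(K)$ now always emanates from $P_2$.

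The key step is to run the Dulac argument of Lemmas \ref{lem.cycles} and \ref{lem.nogood.limit} with the opposite conclusion. Using the same integrating factor $U^a$ with $a=(2-C_m)/C_m$, the divergence computed in \eqref{interm11} equals $C_KU^a\bigl(U-C_s/C_K\bigr)$; since now $C_s<0$, the ratio $C_s/C_K<0$, so this expression is \emph{positive} for all $U>0$. Hence Dulac's criterion rules out limit cycles in the entire right half-plane $\{U>0\}$ for every $K>0$, not merely for $K$ large. Combined with the fact that $P_2$ is always unstable, the Poincaré–Bendixson analysis then forces the orbit $l_1(K)$ entering the saddle $P_1$ to come from $P_2$ for \emph{all} $K\in(0,\infty)$: there is no stable sink for $l_1(K)$ to originate from other than by emerging from the unstable $P_2$, and no limit cycle it could spiral out of. In particular $U_1(K)=1$ identically, so the last crossing of the $U$ axis by $l_1(K)$ is always at the coordinate of $P_2$.

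To finish, I would show $l_0(K)$ cannot coincide with $l_1(K)$. Since $l_1(K)$ reaches $P_1$ directly from $P_2$ (so $U_1(K)=1$), a connection $P_0$–$P_1$ would require $l_0(K)$ to pass through $P_2$ as well, i.e. $U_0(K)=1$. But by the flow direction on the $U$ axis, governed by the sign of $1-U^{(p-m)/(1-m)}$, the orbit $l_0(K)$ leaving $P_0$ into $\{V>0\}$ must first cross the $U$ axis at some $U_0(K)>1$ (the flow pushes it rightward past $U=1$ before $V$ can return to zero), strictly separating it from the value $U=1$ attained by $l_1(K)$. Thus $U_0(K)>1=U_1(K)$ for every admissible $K$, the two orbits never meet at a non-critical point of the $U$ axis, and by the monotonicity of Lemmas \ref{lem.monot} and \ref{lem.strict} they stay strictly ordered; no saddle–saddle connection exists.

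The main obstacle I anticipate is making the Poincaré–Bendixson step fully rigorous in the presence of the critical points at infinity $Q_1,Q_2,Q_4$: one must verify carefully that, with limit cycles excluded and $P_2$ unstable, the \emph{only} source for the orbit $l_1(K)$ is $P_2$ (rather than, say, an orbit arriving from infinity through $Q_2$ that never touches $P_2$). Handling the global topology of the flow on the Poincaré disk — in particular confirming that the region bounded by the $V$ axis, the relevant branch of $l_1(K)$, and the equator at infinity contains $P_2$ as its unique $\omega$- or $\alpha$-limit candidate — is the delicate part, and I would lean on the local classifications in Lemmas \ref{lem.Q23} and \ref{lem.Q1Q4} together with the no-cycle conclusion to close off every alternative.
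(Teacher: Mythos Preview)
Your Dulac computation is correct and is exactly the ingredient the paper uses: since $C_s<0$ for $m\in(m_s,m_c)$, the divergence \eqref{interm11} is strictly positive on $\{U>0\}$ for every $K$, so there are no periodic orbits anywhere. But the step where you conclude that $l_1(K)$ \emph{must come from $P_2$} is circular. The admissible $\alpha$-limit sets for $l_1(K)$ are the unstable equilibrium $P_2$, the unstable node $Q_2$ at infinity, and the saddle $P_0$ via its unique unstable manifold $l_0(K)$. Ruling out limit cycles eliminates none of these three. In particular, the alternative ``$l_1(K)$ comes from $P_0$'' is precisely the connection you are trying to exclude; you cannot discard it by Poincar\'e--Bendixson alone. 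The difficulty you flag in your last paragraph is therefore not a peripheral technicality but the entire content of the lemma. (A secondary issue: even when $l_1(K)$ does come from $P_2$, if $P_2$ is a focus the orbit spirals out and its last crossing of the $U$-axis satisfies $U_1(K)>1$, so the claim $U_1(K)\equiv1$ is also not right.)

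The paper avoids this circularity by arguing by contradiction together with the monotonicity in $K$. Suppose a connection $l_0(K)=l_1(K)$ exists for some $K$, and pick $K'>K$. Lemmas \ref{lem.monot}--\ref{lem.strict} give $U_0(K')<U_0(K)=U_1(K)<U_1(K')$, so $l_0(K')\neq l_1(K')$ and $l_0(K')$ lies strictly inside the region bounded by $l_1(K')$ and the invariant $V$-axis. Inside that region the only finite equilibrium is $P_2$, which is unstable for every $K$ in this range, and your Dulac argument forbids limit cycles; hence $l_0(K')$ has no admissible $\omega$-limit set, contradicting Poincar\'e--Bendixson. The point is that this trapping argument never needs to identify the $\alpha$-limit of $l_1(K)$ for general $K$---it only uses the strict ordering at a single nearby parameter value.
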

\begin{proof}
The fundamental difference with respect to the case $m\in(0,m_s)$ is that, for $m\in(m_s,m_c)$, the point $P_2$ is always an unstable node or focus, for any $K>0$. Assume for contradiction that there exists an orbit connecting $P_0$ to $P_1$ (that is, $l_0(K)=l_1(K)$) for some $K>0$. Take then $K'>K$ and closer to $K$. By monotonicity, the orbits $l_0(K')$ and $l_1(K')$ cross the $U$ axis in the order $U_0(K')<U_0(K)=U_1(K)<U_1(K')$. Moreover, when $m\in(m_s,m_c)$ we infer from \eqref{coefficients} that $C_s<0$, hence the same proof of non-existence of limit cycles in large strips done with the aid of Dulac's Criteria in the proof of Lemma \ref{lem.nogood.limit} gives that there are no limit cycles at all, in the whole plane. Indeed, the outcome of \eqref{interm11} is now
$$
C_KU^a\left(U-\frac{C_s}{C_K}\right)>0, \qquad {\rm for} \ {\rm any} \ U>0.
$$
It thus follows that the orbit $l_1(K')$ comes from the unstable node $Q_2$ at infinity and the orbit $l_0(K')$ must remain forever in the region limited by the $V$ axis and by the orbit $l_1(K')$. But this is a contradiction to the Poincar\'e-Bendixon theory \cite[Section 3.7]{Pe}, since $l_1(K')$ cannot either end in a limit cycle (there is none) or at $P_2$, which is now an unstable point.
\end{proof}
Let us now prove the existence of such a connection but with exponents $\alpha<0$ and $\beta>0$ as claimed in Theorem \ref{th.1}. To this end, we have to adapt our systems \eqref{PPSyst} and then \eqref{PPSystRen} to the case $\alpha<0$. Let us thus start from changing the ansatz of the form of the solutions by letting now
$$
u(x,t)=e^{-\alpha t}f(|x|e^{\beta t}), \qquad \alpha>0,
$$
obtaining thus after straightforward calculations the same equation as \eqref{ODE} but with the signs in front of the terms involving $\alpha$ and $\beta$ changed. By doing exactly the same change of variable \eqref{change}, this translates into a change of sign in two terms from the equation for $\dot{Y}$, more precisely
\begin{equation}\label{PPSyst.bis}
\left\{\begin{array}{ll}\dot{X}=X(2-(m-1)Y),\\ \dot{Y}=-mY^2-(N-2)Y-2X-(1-m)XY-KX^{(m-p)/(m-1)},\end{array}\right.
\end{equation}
and furthermore, with the same change of variable \eqref{change2} we get the system
\begin{equation}\label{PPSystRen.bis}
\left\{\begin{array}{ll}\dot{U}=C_mUV,\\ \dot{V}=-V^2-C_sV+1-C_{K}UV-U^{(p-m)/(1-m)},\end{array}\right.
\end{equation}
with the same values for its coefficients as in \eqref{coefficients}. Notice that the only difference of \eqref{PPSystRen.bis} with respect to \eqref{PPSystRen} is a change of the sign of the term involving $C_K$. Let us set now $\overline{V}=-V$ and a new independent variable $\eta^*=-\overline{\eta}$ in the system \eqref{PPSystRen.bis}. It is immediate to see that, in variables $(U,\overline{V})$ and taking derivatives with respect to $\eta^*$, we obtain exactly the same system \eqref{PPSystRen} with the only change that $\overline{C_s}=-C_s$. A careful inspection of the proofs in Subsections \ref{subsec.monot} and \ref{subsec.large} shows that in fact the precise values of the coefficients $C_m$, $C_s$, $C_K$ and the power $(p-m)/(1-m)$ of $U$ are completely irrelevant, the analysis of the dynamical system is completely independent provided that the coefficients satisfy the conditions $C_m>0$, $C_s>0$, $C_K=AK^{-(p-m)/(1-m)}$ with $A>0$, and the power of $U$ any number larger than one. Thus, since the analysis in Subsection \ref{subsec.small} is anyway valid for every $m\in(0,m_c)$, we can repeat step by step the analysis with our new set of coefficients $C_m$, $\overline{C_s}$, $C_K$ and the new power $(p-m)/(1-m)$ with $m\in(m_s,m_c)$ exactly along the same lines as the analysis done before, the only change being that
$$
\overline{C_s}=-C_s=\frac{(N+2)(m-m_s)}{\sqrt{2mN(m_c-m)}}>0, \qquad {\rm since} \ m\in(m_s,m_c).
$$
This concludes the existence of a good anomalous solution with $\alpha<0$ when $m\in(m_s,m_c)$.

\noindent \textbf{Remark.} The importance of the fact that $C_s>0$ is illustrated also by the following calculation. The eigenvalues (either real or complex) of the linearization near the point $P_2=(1,0)$ in the system \eqref{PPSystRen} (and similarly \eqref{PPSystRen.bis} with $\overline{C_s}=-C_s$) are
$$
\lambda_{1,2}=\frac{m(C_K-C_s)\pm\sqrt{m^2(C_K-C_s)^2-4m(p-m)}}{2m},
$$
thus it is essential to have $C_s>0$ in order for the term $C_K-C_s$ to change sign with $K\in(0,\infty)$, allowing thus the change of the critical point $P_2$ from unstable to stable, which was fundamental in the proofs.

\subsection{The explicit case $m=m_s$. Stationary solutions}\label{subsec.stat}

We are left with the case $m=m_s$, for which the analysis performed in Subsections \ref{subsec.large} and \ref{subsec.negative} together with the expected continuity of the exponents $\alpha$ with respect to $m$ give us the idea that we have to look for stationary solutions, that is, with $\alpha=\beta=0$. With this ansatz and recalling that $m=m_s$, we readily get that $C_s=C_K=0$ in the system \eqref{PPSystRen}. We then find that this system becomes integrable by letting
$$
\frac{dV}{dU}=\frac{1-V^2-U^{(p-m)/(1-m)}}{C_mUV},
$$
and getting by direct integration (and putting the integration constant to zero) the explicit curve
\begin{equation}\label{curve.ms}
V^2=1-\frac{2m}{m+p}U^{(p-m)/(1-m)},
\end{equation}
which is indeed an orbit connecting the critical points $P_0=(0,1)$ and $P_1=(0,-1)$ (since $C_s=0$). Let us further notice that the monotonicity arguments in Subsection \ref{subsec.monot} remain valid for $m=m_s$, leading to the uniqueness of the orbit given in \eqref{curve.ms}. With this, Theorem \ref{th.1} is fully proved in this case.

However, despite the fact that the curve \eqref{curve.ms} cannot be easily integrated in terms of profiles, we can still obtain explicit formulas for the stationary solutions contained in the orbit \eqref{curve.ms}, which are exactly equal to their profile since $\alpha=\beta=0$.
\begin{proposition}\label{prop.expl}
The stationary solutions contained in the orbit \eqref{curve.ms} have the explicit form
\begin{equation}\label{expl.stationary}
u(x)=\left[\frac{(N^2-4)(p+m_s)D}{(1+D|x|^{L})^2}\right]^{1/(p-m_s)}, \ L=\frac{(N+2)p-(N-2)}{2}, \ D>0 \ {\rm free \ constant}.
\end{equation}
\end{proposition}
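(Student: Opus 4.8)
The plan is to use that, precisely at $m=m_s$, the stationary profiles solve a \emph{conformally critical} semilinear elliptic equation whose positive radial solutions are explicit ``bubbles''. Setting $\alpha=\beta=0$ in \eqref{ODE}, a stationary solution $u(x)=f(|x|)$ (equal to its own profile) satisfies
\begin{equation*}
\Delta u^{m_s}+|x|^{\sigma}u^{p}=0 \quad\text{in }\real^N, \qquad (u^{m_s})''+\frac{N-1}{\xi}(u^{m_s})'+\xi^{\sigma}u^{p}=0 .
\end{equation*}
First I would introduce the new unknown $w=u^{m_s}$, which turns this into the pure Hénon-type equation $\Delta w+|x|^{\sigma}w^{q}=0$ with $q=p/m_s$. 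The decisive observation, obtained from a short exponent computation using \eqref{sig} and \eqref{ms}, is that $q$ equals the \emph{critical} Hénon--Sobolev exponent $q_c=(N+2+2\sigma)/(N-2)$ (equivalently $N+2+2\sigma=p(N+2)$) and that $2+\sigma$ equals the exponent $L$ appearing in \eqref{expl.stationary}. Thus $w$ solves a conformally invariant equation, which is exactly why an explicit profile can be expected.

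Next I would insert the bubble ansatz $w(\xi)=\rho^{-\gamma}$, with $\rho:=b+c\,\xi^{L}$ and $\gamma=(N-2)/L$, into the radial equation and collect powers of $\xi^{L}$ after clearing the common factor $\rho^{-\gamma-2}$. The coefficient of the top power $\xi^{L}$ vanishes \emph{identically} because $\gamma L=N-2$ --- this cancellation is the algebraic signature of criticality and is precisely what makes the profile explicit --- while the surviving constant term forces $b$ to be a fixed multiple of $1/c$, leaving $c>0$ as the single free parameter. Reverting through $u=w^{1/m_s}$ and using the identities $\gamma=2m_s/(p-m_s)$ and $(p-m_s)/m_s=2/\gamma$, the fractional exponents collapse to integers and one obtains $u^{p-m_s}=(\text{const})\,(1+D\xi^{L})^{-2}$ with $D:=c/b$, which is the announced form \eqref{expl.stationary}; the numerator constant is then read off directly from the constant term of the reduced equation.

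I expect the only genuine work to be the constant bookkeeping in this last matching step: tracking how the prefactor $b^{-\gamma}$ of $w$ and the parameter $D=c/b$ recombine after raising to the power $1/m_s$. Nothing here is conceptually hard once the critical structure $q=q_c$, $L=2+\sigma$ is recognized, so a fully rigorous argument is really just a careful substitution. Finally, to justify that \eqref{expl.stationary} describes \emph{the} stationary solutions sitting on the orbit \eqref{curve.ms}, and not merely some of them, I would invoke that the monotonicity arguments of Subsection \ref{subsec.monot} remain valid at $m=m_s$ (as already noted after \eqref{curve.ms}): the connection $P_0$--$P_1$ given by \eqref{curve.ms} is the unique one, and the one-parameter family parametrized by $D>0$ simply reflects the scaling invariance of the stationary equation (under which $\xi\mapsto\lambda\xi$ together with a compensating power rescaling of $u$), which moves points along that single orbit. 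Hence the bubbles exhaust the stationary solutions carried by \eqref{curve.ms}.
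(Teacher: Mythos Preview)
Your approach is correct and leads to the same family of profiles, but it is genuinely different from the paper's. The paper does not pass through the H\'enon equation; instead it applies the change of variables $w(r)=r^{2/(1-m_s)}u(r)$, $y=\ln r$, which turns the stationary equation into the autonomous second-order ODE $(w^{m_s})_{yy}-\frac{2m_sN(m_c-m_s)}{(m_s-1)^2}w^{m_s}+w^p=0$, a Fisher--KPP type equation. Multiplying by $(w^{m_s})_y$ yields a first integral, and after a further substitution $g=Cw^{p-m_s}$ the equation reduces to $g_y=\pm Lg\sqrt{1-g}$, whose solutions are $g(y)=\cosh^{-2}(-L(C+y)/2)$; undoing the changes recovers \eqref{expl.stationary}. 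Your route, by contrast, sets $w=u^{m_s}$ and identifies the resulting equation $\Delta w+|x|^{\sigma}w^{p/m_s}=0$ as the \emph{critical} H\'enon problem (the identity $p/m_s=(N+2+2\sigma)/(N-2)$ is exactly right, as is $L=2+\sigma$), so that the explicit Talenti-type bubble $w=(b+c\xi^L)^{-(N-2)/L}$ can be inserted directly. What your approach buys is a conceptual explanation of \emph{why} an explicit solution exists at $m=m_s$ --- it is the conformal criticality of the transformed problem --- whereas the paper's method is more self-contained (no appeal to the H\'enon literature) and illustrates a transformation used elsewhere in the paper. Your final paragraph, invoking the monotonicity of Subsection~\ref{subsec.monot} to conclude that the one-parameter family exhausts the orbit, matches the paper's own justification verbatim.
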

\begin{proof}
It is easy to check directly from \eqref{expl.stationary} that, since $L>1$, $u(0)>0$, $u'(0)=0$ and $u(x)$ has the expected decay
$$
u(x)\sim C|x|^{-(N+2)}=C|x|^{-(N-2)/m_s}, \qquad {\rm as} \ |x|\to\infty,
$$
hence the functions \eqref{expl.stationary} belong to the orbit \eqref{curve.ms} for any $D>0$. However, it is rather instructive to be fair with the reader and explain in the next lines how we actually got to the expression in \eqref{expl.stationary}, since it cannot be done directly from Eq. \eqref{ODE} in an obvious way. Thus, we use the following transformation
\begin{equation}\label{interm20}
w(r)=r^{2/(1-m)}u(r), \qquad y=\ln\,r, \qquad r=|x|,
\end{equation}
which is a particular case of the more general change of variable introduced in \cite[Section 6]{IS21b} to obtain the following equation
\begin{equation}\label{interm21}
0=(w^m)_{yy}-\frac{2mN(m_c-m)}{(m-1)^2}w^m+w^p,
\end{equation}
which is the stationary counterpart of a general Fisher-type equation studied in \cite{SH05, HS14}. Here and in the next lines, the subscripts indicate derivatives with respect to the variable $y$. We can then multiply by $(w^m)_y$ in \eqref{interm21} and integrate to obtain
\begin{equation}\label{interm22}
\frac{1}{2}\left[(w^m)_y\right]^2-\frac{mN(m_c-m)}{(m-1)^2}(w^m)^2+\frac{m}{p+m}w^{p+m}=C.
\end{equation}
Since
$$
w(r)\sim r^{2/(1-m)+(2-N)/m}=r^{N(m-m_c)/m(1-m)}\to0, \qquad {\rm as} \ r\to\infty
$$
and
$$
(w^m)_y\sim r^{N(m-m_c)/(1-m)}\to0, \qquad {\rm as} \ r\to\infty,
$$
we infer that $C=0$ in \eqref{interm22}. We further introduce a new function by setting
$$
g(y)=\frac{(m-1)^2}{2N(p+m)(m_c-m)}w(y)^{p-m},
$$
and \eqref{interm22} writes in term of $g$ as the following easy to integrate differential equation
$$
g_y=\pm Lg\sqrt{1-g}, \qquad L=\frac{p-m_s}{1-m_s}\sqrt{\frac{2N(m_c-m_s)}{m_s}}=\frac{2(p-m_s)}{1-m_s}.
$$
We obtain by integration that
\begin{equation}\label{interm23}
g(y)=\frac{1}{\left[\cosh\left(-\frac{L}{2}(C+y)\right)\right]^2}, \qquad C\in\real \ {\rm free \ constant}.
\end{equation}
Starting from \eqref{interm23} and undoing the transformation in \eqref{interm20} we reach after some straightforward calculations the expression \eqref{expl.stationary}.
\end{proof}

\noindent \textbf{Remark.} This stationary behavior for $m=m_s$ expresses once more the perfect balance between the fast diffusion and the weighted reaction in Eq. \eqref{eq1}. We recall here that the fast diffusion equation Eq. \eqref{FDE} also has explicit solutions for $m=m_s$, related to the geometrical Yamabe problem, but these solutions present finite time extinction \cite[Section 7.2]{VazSmooth}. We raise an \textbf{open problem} connected to these solutions at the end of the present paper.

\subsection{Non-existence for $m\in[m_c,1)$}\label{subsec.nonex}

Let us consider now $m\geq m_c$. This part is now easy by the well-established theory. Indeed, if $m>m_c$ we can reformulate the problem by letting $\sigma>0$ free and expressing $p$ in terms of $\sigma$ from \eqref{sig} as a "critical value" to get
$$
p(\sigma)=1+\frac{\sigma(1-m)}{2}.
$$
Thus, recalling the value of the Fujita-type exponent $p_{F,\sigma}$ in \eqref{Fujita_sigma} we get
$$
p_{F,\sigma}-p(\sigma)=\frac{\sigma+2}{2}\left(\frac{2}{N}+m-1\right)=\frac{\sigma+2}{2}(m-m_c)>0,
$$
whence $1<p(\sigma)<p_{F,\sigma}$ for any $\sigma>0$ and $m\in(m_c,1)$. Thus there cannot exist any "eternal" solution to \eqref{eq1}, as all the solutions to Eq. \eqref{eq1} blow up in finite time according to \cite{Qi98}. For $m=m_c$, we do the following transformation in radial variables (with $r=|x|$)
$$
y=\ln\,r, \qquad w(y,t)=r^{2/(1-m)}u(r,t),
$$
which is a particular case of the general transformation introduced in \cite[Section 6]{IS21b}, leading in our case to the following equation
\begin{equation}\label{eq2}
w_t=(w^m)_{yy}+\frac{2m}{m-1}(w^m)_y+w^p,
\end{equation}
and the eternal self-similar solutions to Eq. \eqref{eq1} are mapped into traveling wave solutions to Eq. \eqref{eq2}, as shown in \cite[Section 6]{IS21b}. But Eq. \eqref{eq2} is a particular case of the more general equation
\begin{equation}\label{eq2.gen}
w_t=a(w^{m})_{yy}+b(w^{m})_y+kw^p,
\end{equation}
which is analyzed in \cite{dPS00}. In particular, it is shown there that Eq. \eqref{eq2.gen} does not admit any traveling waves if $a>0$, $p>1$ and $k>0$, which is exactly our case (with $k=1$, $a=1$ and $p>1$). Thus there are no eternal self-similar solutions to Eq. \eqref{eq1} for $m=m_c$, completing the analysis. The non-existence for the critical case $m=m_c$ can be also seen from the phase plane: indeed, since the critical point $P_2$ disappears for $m=m_c$, an orbit connecting $P_0$ to $P_1$ would lead to a contradiction with the Poincar\'e-Bendixon Theorem.

\section{Some explicit connections in the phase plane and self-maps}\label{sec.appendix}

In this final section we gather several facts that complete the study of Eq. \eqref{eq1}, such as explicit or semi-explicit solutions (identified as explicit orbits in the phase plane), and a self-map of the equation. Most of these explicit solutions or trajectories of the phase plane are obtained when $m+p=2$.

\medskip

\noindent \textbf{Explicit good orbits connecting $P_0$ to $P_1$}. Let us consider $m$, $p$ such that $m+p=2$. We construct below some explicit saddle-saddle connections in the phase plane associated to the system \eqref{PPSyst}. Let us start with a rotation such that the eigenvector tangent to the orbit $l_0(K)$ going out of $P_0$ is mapped on the $Y$-axis. That is done by introducing
$$
W:=-Y+\frac{2}{N}X
$$
and obtain a new, equivalent system
\begin{equation}\label{PPSyst.exp}
\left\{\begin{array}{ll}\dot{X}=(m-1)XW-\frac{2(m-1)}{N}X^2+2X,\\ \dot{W}=mW^2-(N-2)W-\frac{(N+2)(m-m_s)}{N}XW+\frac{KN^2+2N(m-m_c)}{N^2}X^2.\end{array}\right.
\end{equation}
The idea is to look for explicit solutions of the system \eqref{PPSyst.exp} in the particular form
\begin{equation}\label{good.orbits}
X=aW^{1/2}+bW, \qquad a, b \ {\rm to \ be \ determined}
\end{equation}
and show that for suitable choices of $a$ and $b$, the orbits in \eqref{good.orbits} describe saddle-saddle connections between $P_0$ and $P_1$, thus containing good profiles. To this end, the main idea is to calculate the direction of the flow of the system \eqref{PPSyst.exp} on the curves of the form \eqref{good.orbits} and ask it to be identically zero. These calculations are rather tedious and have been done with the aid of a symbolic calculation program. We get that the flow is given by the following expression (depending on $W$)
\begin{equation}\label{flow}
F(W)=-\frac{1}{2N^2}\left(A_1W^2+A_2W^{3/2}+A_3W+A_4W^{1/2}\right),
\end{equation}
where $A_i$, $i=1,4$, are explicit expressions depending on $m$, $N$, $K$, $a$ and $b$ (recall that we have $m+p=2$) whose expressions will be introduced one by one below. We require all these four coefficients to be zero and obtain some values for $a$ and $b$. We start with $A_4$:
$$
A_4=(-KN^2-2mN+2N-4)a^3+N^2(N+2)a=0,
$$
from where we deduce the value of $a$ by letting
\begin{equation}\label{interm12}
a=\sqrt{\frac{N^2(N+2)}{KN^2+2N(m-m_c)}}, \qquad {\rm provided} \ K>\frac{2(m_c-m)}{N}.
\end{equation}
We go now to the coefficient $A_3$, which writes
$$
A_3=a^2[-4b(KN^2+2N(m-m_c))+N(mN-N-2m+6)]+2N^3b
$$
and equate $A_3=0$ in terms of $b$, after substituting $a^2$ by its expression in \eqref{interm12}, to get
\begin{equation}\label{interm13}
b=\frac{N(mN^2-N^2+4N-4m+12)}{2(KN^3+4KN^2+2N^2m-2N^2+8mN-4N+16)}.
\end{equation}
We further go to the expression of $A_1$ to find out the precise value of $K$. We have
$$
A_1=-2b^3(KN^2+2N(m-m_c))+2b^2N(mN-N+4)-2N^2b=0,
$$
from which, after substituting $b$ from \eqref{interm13} we obtain the precise value of the parameter $K$ for which the orbits exist
\begin{equation}\label{interm14}
K=\frac{[N(N+8)(m-1)+4(m+1)][N^2(m-1)-4(m+1)]}{4N^2(N+4)^2}
\end{equation}
and then the value of $b$ after replacing this value of $K$ in \eqref{interm13}
\begin{equation}\label{interm15}
b=\frac{2N(N+4)}{mN^2-N^2+8mN-4N+4m+20}.
\end{equation}
It is easy to check that $K>0$ in \eqref{interm14} for any $m\in(0,m_c)$, as both factors in the numerator of its formula are negative in this range. Moreover, the compatibility condition given in \eqref{interm12} to insure the existence of $a$ becomes
$$
\frac{2(m_c-m)}{N}-K=-\frac{(N+2)(mN-N-2m+6)[m(N^2+8N+4)-N^2-4N+20]}{4N^2(N+4)^2}<0,
$$
which is fulfilled if either $m<m_1$ or $m>m_2$, where
\begin{equation}\label{interm16}
m_1=\frac{N-6}{N-2}, \qquad m_2=\frac{N^2+4N-20}{N^2+8N+4}.
\end{equation}
Let us notice that $m_2<m_s$ and that $m_1>0$ if and only if $N\geq7$. We are now left with the second coefficient $A_2$ in \eqref{flow}, which after replacing $K$, $b$ and $a$ with their expressions in \eqref{interm14}, \eqref{interm15} and \eqref{interm12} respectively, gives
$$
A_2=\frac{N^2[(N^2+8N+4)m^2-(2N^2-16)m+(N-2)(N-6)]}{(N^2+8N+4)m-N^2-4N+20}=0.
$$
Defining
\begin{equation}\label{interm17}
f(m)=(N^2+8N+4)m^2-(2N^2-16)m+(N-2)(N-6),
\end{equation}
we readily find that $f(m_1)<0$, $f(m_2)<0$ and $f(m_s)<0$, thus we infer that $m_1$, $m_2$ and $m_s$ belong to the interval $(m_3,m_4)$ of its roots, which are given by
\begin{equation}\label{interm18}
m_3=\frac{N^2-8-4\sqrt{2N^2-4N+1}}{N^2+8N+4}, \qquad m_4=\frac{N^2-8+4\sqrt{2N^2-4N+1}}{N^2+8N+4}.
\end{equation}
Let us further notice (by easy calculations that we omit) that $m_3>0$ if and only if $N>6$ and that $m_2<m_c$ for every $N$. Moreover, we remark that
$$
\frac{N-2}{m}-\frac{a^2}{b^2}=-\frac{f(m)}{m(mN-N-2m+6)}=0,
$$
provided $m=m_3$ or $m=m_4$, where $f(m)$ is defined in \eqref{interm17}. The latter shows that the orbit we constructed in the phase plane enters the critical point $P_1$.

Putting everything together, the construction is done through the following process: pick any dimension $N\geq7$ and then let $m=m_3\in(0,m_s)$ given by \eqref{interm18}, $K>0$ given by \eqref{interm14}, $p=2-m_3$, $b$ given by \eqref{interm15} and $a$ given by \eqref{interm12}. With these choices, we get an explicit good connection $P_0$-$P_1$ for any such dimension $N\geq7$ given by \eqref{good.orbits}. We can also notice along the same lines that if we choose $m=m_4\in(m_s,m_c)$ we get a good connection in the phase plane obtained for the case $\alpha<0$ and described in Subsection \ref{subsec.negative}.

\medskip

\noindent \textbf{Other explicit profiles that are not contained in a connection $P_0$-$P_1$}. Apart from the saddle-saddle connections $P_0$-$P_1$ constructed above, we can give some more examples of orbits and profiles connecting the critical points in the phase plane associated to the system \eqref{PPSyst}.

$\bullet$ There exists an explicit solution to \eqref{FDE}
\begin{equation}\label{sol.P2}
f(\xi)=C\xi^{-2/(1-m)}, \qquad C=\left[\frac{2mN(m_c-m)}{(m-1)^2}\right]^{1/(p-m)},
\end{equation}
which is represented in the phase plane by the critical point $P_2$ itself. Independent of $\alpha$ and $\beta$, this gives rise to a stationary solution $u(x,t)=C|x|^{-2/(1-m)}$, which presents a vertical asymptote at the origin. Such a solution is an analogous for Eq. \eqref{eq1} to the separate variable solution $U(x,t;T)$ to the standard fast diffusion equation Eq. \eqref{FDE} in \cite[Section 5.2.1, p.80]{VazSmooth}, with the noticeable difference that our solution is stationary and the solution to Eq. \eqref{FDE} extinguishes in finite time. This is another illustration of the perfect balance between diffusion and reaction in our equation. A rather similar stationary solution exists for the homogeneous case $\sigma=0$ with $p=1$ and $m<1$ as a limit case of the more general stationary solutions for $p>m$
$$
u(x,t)=C|x|^{2/(m-p)}, \qquad p>\frac{m(N-2)}{N},
$$
given in \cite[Section V.2.2, p.212]{S4}.

$\bullet$ Letting again $m+p=2$, one can look for orbits that are straight lines of the form $Y=aX+b$ in the phase plane. The direction of the flow of the system \eqref{PPSyst} on such a line is given by
\begin{equation}\label{interm19}
G(X)=(-a^2-am+a-K)X^2-[ab(m+1)+aN+b(m-1)-2]X-b(bm+N-2)
\end{equation}
and we wish to have $G(X)\equiv0$. From the last term we deduce that either $b=0$ or $b=-(N-2)/m$. On the one hand, if $b=0$, we infer from equating to zero the other coefficients in \eqref{interm19} that $a=2/N$ and $K=2(m_c-m)/N>0$, thus we get a line $Y=2X/N$. By replacing $X$, $Y$ from \eqref{change} and an easy integration, we obtain the family of explicit profiles
\begin{equation}\label{expl.P0Q4}
f(\xi)=\left[C-\frac{\alpha(1-m)}{2mN}\xi^2\right]^{-1/(1-m)}, \qquad C>0 \ {\rm free \ constant},
\end{equation}
presenting a vertical asymptote and belonging to a straight line connecting $P_0$ to the critical point at infinity $Q_4$ in the phase plane. On the other hand, if $b=-(N-2)/m$, we infer from equating to zero the other coefficients in \eqref{interm19} that
$$
a=\frac{N(m-m_c)}{2m-N+2}, \qquad K=\frac{2N(m_c-m)m^2}{(2m-N+2)^2}
$$
which are both positive if $m<m_c$ and $N\geq3$. We thus get a line $Y=aX+b$ that starts from $P_1$, passes through $P_2$ and then ends at $Q_4$. We obtain thus by integration the following explicit family of profiles
\begin{equation}\label{expl.P1Q4}
f(\xi)=\xi^{-2/(1-m)}\left[\frac{(1-m)\alpha}{2(N-2-2m)}+D\xi^{N(m_c-m)/m}\right]^{-2/(1-m)}, \qquad D\in\real \ {\rm free \ constant},
\end{equation}
where for $D=0$ we recall the stationary solution given in \eqref{sol.P2}, with $D>0$ we get profiles having a vertical asymptote at $\xi=0$ and good tail behavior as $\xi\to\infty$ (the line connecting $P_2$-$P_1$) and with $D<0$ we get a profile with two vertical asymptotes (the line connecting $P_2$-$Q_4$). Such a family of profiles with two vertical asymptotes has been also obtained for Eq. \eqref{eq1} with $\sigma=0$ and $p>m+2/N$ but presenting finite time blow-up, see for example \cite[Figure 5.1,p. 214]{S4}.

\medskip

\noindent \textbf{A self-map of Eq. \eqref{eq1}}. It is straightforward to check that, fixing $m\in(0,m_c)$ and $p>1$, we can obtain exactly the same phase plane associated to the system \eqref{PPSystRen} by equating $C_s$ and $C_K$ for different values of $N$ and $K$, namely
\begin{equation}\label{self.map1}
\overline{N}=\frac{2(N-2-2m)}{N(m_c-m)}, \qquad \overline{K}=K\left[2mN(m_c-m)\right]^{(2-p-m)/(1-m)}.
\end{equation}
We then infer that the change of self-similar exponents from one solution to the other is given by
$$
\overline{\alpha}=\alpha\left[2mN(m_c-m)\right]^{(m+p-2)/(p-m)}, \qquad \overline{\beta}=\beta\left[2mN(m_c-m)\right]^{(m+p-2)/(p-m)},
$$
while from equating the variables of the phase plane system we readily get the following changes for the dependent and the independent variables:
\begin{equation}\label{self.map2}
\overline{\xi}=\xi^{-N(m_c-m)/2m}, \qquad \overline{f}(\overline{\xi})=\left[\frac{2m}{N(m_c-m)}\right]^{2/(p-m)}\xi^{(N-2)/m}f(\xi).
\end{equation}
It is easy to check that, due to the change of dimension in \eqref{self.map1}, the self-map given in \eqref{self.map2} matches the interval $m\in(0,m_s)$ into $m\in(m_s,m_c)$ (in fact, the value of $m$ is the same, but as the dimension changes, also the critical exponents $m_c$ and $m_s$ change according to \eqref{mc} and \eqref{ms}) and it is an inversion, mapping the anomalous solutions between themselves.

Let us finally notice that the self-map to Eq. \eqref{eq1} constructed in \eqref{self.map1}-\eqref{self.map2} is a generalization of an interesting \emph{self-map for the fast diffusion equation} \eqref{FDE} obtained as a particular case of the more general self-maps constructed in \cite[Section 2.1]{IRS13} (taking $\gamma=\tilde{\gamma}=0$ in the notation of the quoted reference) but that seems to have remained unnoticed: for the fast diffusion equation the change of dimension is exactly the same as in \eqref{self.map1}, while the changes of independent variable and function are perfectly similar to the ones in \eqref{self.map2} except for the constant in front of the right-hand side of the change from $\overline{f}$ to $f$. A similar self-map for the porous medium equation or the fast diffusion equation with $m>m_c$ has been introduced in \cite[Section 3]{ISV08}, the algebraic (symbolic) formulas being essentially the same ones as for the range $m\in(0,m_c)$.

\medskip

\noindent \textbf{An open problem.} Related to the solutions in Proposition \ref{prop.expl} and the idea of using in the proof a transformation to a generalized Fisher-type equation analyzed in the short note \cite{HS14}, it is there shown that, in the "neighbor case" of letting $w^q$ with $q<m$ instead of $w^m$ in the reaction part of \eqref{interm22}, the stationary solutions act as a \emph{separatrix between blow-up and extinction}, in the sense that any solution with suitably regular initial condition $u_0(x)$ lying entirely below the stationary solution vanishes in finite time and any solution with suitably regular initial condition $u_0(x)$ lying entirely above the stationary solution blows up in finite time. This allows us to raise the following question, which we believe that is very interesting: is it true, that the anomalous eternal solutions constructed in Theorem \ref{th.1} for any $m\in(0,m_c)$ (or at least, the stationary ones for the explicit case $m=m_s$) also separate for our equation Eq. \eqref{eq1} between solutions that vanish in finite time and solutions that blow up in finite time?

\bigskip

\noindent \textbf{Acknowledgements} A. S. is partially supported by the Spanish project MTM2017-87596-P.

\bibliographystyle{plain}

\end{document}